\theoremstyle{plain}
\newtheorem{thm}{Theorem}[section]
\newtheorem{fact}[thm]{Fact}
\newtheorem{lem}[thm]{Lemma}
\newtheorem{cor}[thm]{Corollary}
\theoremstyle{definition}
\newtheorem{defi}[thm]{Definition}
\newtheorem{remark}[thm]{Remark}
\newtheorem*{question}{Question}
\def\Ind{\setbox0=\hbox{$x$}\kern\wd0\hbox to 0pt{\hss$\mid$\hss}
\lower.9\ht0\hbox to 0pt{\hss$\smile$\hss}\kern\wd0}
\def\Notind{\setbox0=\hbox{$x$}\kern\wd0\hbox to 0pt{\mathchardef
\nn=12854\hss$\nn$\kern1.4\wd0\hss}\hbox to
0pt{\hss$\mid$\hss}\lower.9\ht0 \hbox to
0pt{\hss$\smile$\hss}\kern\wd0}
\def\acl{\mathrm{acl}}
\title{On the Canonical Base Property}
\date{May 25, 2012}
\author{Ehud Hrushovski, Daniel Palac\'\i n, and Anand Pillay}
\thanks{The research leading to these results has received funding for the first author from the European Research Council under the European Union's Seventh Framework Programme (FP7/2007-2013) / ERC Grant agreement no. 291111.
The second author was partially supported by research project MTM 2011-26840 of the Spanish government and research project 2009SGR 00187 of the Catalan government. The third author was supported by EPSRC grant EP/I002294/1 and also by the Max Planck Institute in Bonn. }
\keywords{stable theory; $\aleph_1$-categoricity, definable Galois group; $CBP$}
\subjclass[2000]{03C45}
\address{Hebrew University of Jerusalem}
\email{ehud@math.huji.ac.il}
\address{Universitat de Barcelona; Departament de L\`ogica, Hist\`oria i Filosofia de la Ci\`encia, Montalegre 6, 08001 Barcelona, Spain}
\email{dpalacin@ub.edu}
\address{University of Leeds}
\email{A.Pillay@leeds.ac.uk}
\begin{document}

\begin{abstract}  We give an example of a finite rank, in fact $\aleph_{1}$-categorical, theory where the canonical base property ($CBP$) fails. In fact
we give a ``group-like" example in a sense that we will describe below. We also prove, in a finite Morley rank context, that if all definable Galois groups are ``rigid" then $T$ has the $CBP$.
\end{abstract}

\maketitle

\section{Introduction and preliminaries}
The {\em canonical base property} or $CBP$ is a property of finite rank theories, the formulation of which was motivated by results of Campana in bimeromorphic geometry  and analogous results by Pillay and Ziegler in differential and difference algebraic geometry in characteristic $0$. The notion has been studied by Chatzidakis \cite{zoe}, Moosa and Pillay \cite{moo-pil} (where the expression $CBP$ was introduced)  and in a somewhat more general framework  by Palac\'\i n  and Wagner \cite{pal-wag}. The notion makes sense for arbitrary supersimple theories of finite $SU$-rank. But to avoid unnecessary abstraction we will restrict ourselves here to {\em stable theories $T$ of finite Morley rank} (and even more). We now state the $CBP$ giving definitions of some of the ingredients in an appropriate context  later in this section.

\begin{defi} $T$ has the $CBP$ if (working in $M^{eq}$ for a saturated $M\models T$, and over any small set of parameters), for any $a,b$ such that $tp(a/b)$ is stationary and $b$ is the canonical base of $tp(a/b)$, $tp(b/a)$ is {\em  semiminimal}, namely almost internal to the family of $U$-rank $1$ types.
\end{defi}

Here the canonical base of a stationary type $p(x) = tp(a/B)$, written $Cb(p)$, is the smallest definably closed subset $B_{0}$ of $dcl(B)$ such that $p$ does not fork over $B_{0}$ and $p|B_{0}$ is stationary. Given our totally transcendental hypothesis on $T$, $B_{0}$ will be the definably closure of a finite subtuple $b_{0}$ and we write $b_{0}$ for $Cb(p)$. 

\begin{remark} Chatzidakis shows in \cite{zoe} that with notation as above $tp(b/a)$ is always {\em analyzable} in the family of nonmodular $U$-rank $1$ types (which we know to be of Morley rank $1$). Hence the $CBP$ is equivalent to saying that $tp(b/a)$ is almost internal to the family of  nonmodular strongly minimal definable sets.
\end{remark} 

When $T$ is the theory of the many-sorted structure $CCM$ of compact complex manifolds (with predicates for analytic subvarieties of products of sorts), Pillay \cite{pil02} noted that results of Campana yield that the $CBP$ holds in the strong sense that $tp(b/a)$ is internal to the sort of the projective line over $\mathbb C$. This gives another proof that this sort is the only nonmodular strongly minimal set in $CCM$ up to nonorthogonality. Then Pillay and Ziegler \cite{PZ03} proved the analogous result where $T$ is the many sorted theory of definable (over some fixed set of parameters) sets of finite Morley rank in differentially closed field $(K,+,\cdot, \partial)$ with all induced structure:  $tp(b/a)$ is internal to the {\em constants}. This again gives another proof that the constants is the only nonmodular strongly minimal definable set in $DCF_{0}$ up to nonorthogonality. 

The following consequence of the $CBP$ for definable groups was observed in \cite{PZ03} for example, but we repeat the proof for the convenince of the reader.
\begin{fact} Assume $T$ has the $CBP$. Let $G$ be a definable group, let $a\in G$ and suppose $p(x) = stp(a/A)$ has finite stabilizer. Then $p$ is  semiminimal (in fact as above almost internal to the family on nonmodular strongly minimal sets). 
\end{fact}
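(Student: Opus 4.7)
The plan is to use a standard group-theoretic trick to exhibit $a$ as (interalgebraic with) the canonical base of a suitable type, and then invoke the $CBP$. Assume without loss of generality that $A=\acl(A)$. Take two independent realizations $a_1, a_2$ of $p$ over $A$ and set $b:=a_1\cdot a_2^{-1}$.

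The key step is to compute $c:=\cb(\stp(b/Aa_1))$ up to interalgebraicity over $A$. Viewed as a type over $Aa_1$, $\stp(b/Aa_1)$ is the left translate by $a_1$ of $q|_{Aa_1}$, where $q:=\stp(a^{-1}/A)$. Since $q$ is definable over $A$, the translate is definable over $Aa_1$, so $c\in\dcl(Aa_1)$. Conversely, two elements $a_1, a_1'$ give the same left translate of $q$ precisely when $a_1^{-1}a_1'\in\mathrm{Stab}(q)$, and this stabilizer is finite because $\mathrm{Stab}(p)$ is finite (in a stable theory, finiteness of the stabilizer is invariant under passage to $p^{-1}$ and under swapping left/right). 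Hence $a_1$ is determined over $A$ by $c$ up to finitely many choices, so $a_1\in\acl(Ac)$.

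Having established interalgebraicity of $c$ and $a_1$ over $A$, the $CBP$ applied to the stationary type $\stp(b/Ac)$ (which still has canonical base $c$, as $\stp(b/Aa_1)$ is a non-forking extension of it) yields that $\tp(c/Ab)$ is semiminimal; by interalgebraicity so is $\tp(a_1/Ab)$. To descend this to $p$: semiminimality is preserved under adding parameters, so $\tp(a_1/A,b,a_2)$ is semiminimal; since $b\in\dcl(a_1,a_2)$, this is the same complete type as $\tp(a_1/Aa_2)$. Finally, $a_1\ind_A a_2$ by construction, so $\tp(a_1/Aa_2)$ is a non-forking extension of $p=\tp(a_1/A)$, and semiminimality descends to $p$.

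The main technical point is the interalgebraicity of $c$ with $a_1$, where the finite stabilizer hypothesis is essential; the remainder of the argument is routine transfer of semiminimality through non-forking restrictions and extensions together with the fact that $b$ lies in $\dcl(a_1,a_2)$.
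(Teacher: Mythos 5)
Your overall strategy --- realize something interalgebraic with $a$ as the canonical base of a translated type, apply the $CBP$, then descend by non-forking --- is the same as the paper's, but two steps are wrong, and one of them is fatal. The fatal one is the descent at the end. You pass from ``$\tp(a_1/Ab)$ is semiminimal'' to ``$\tp(a_1/A,b,a_2)$ is semiminimal'' and then claim that, since $b\in\dcl(a_1,a_2)$, the latter is the same complete type as $\tp(a_1/Aa_2)$. It is not: $b\in\dcl(a_1,a_2)$ only gives $\dcl(A,b,a_2)\subseteq\dcl(A,a_1,a_2)$, and since $a_1=b\cdot a_2\in\dcl(b,a_2)$ the type $\tp(a_1/A,b,a_2)$ is algebraic, so its semiminimality says nothing. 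What your descent really needs is that $a_1$ be independent from $b$ over $A$, and that fails in general: in $G=(K,+)^2$ with $p=\stp((t,t^2)/A)$ for $t$ generic (a type with trivial stabilizer), $b=a_1-a_2$ has Morley rank $2$ over $A$ but rank $1$ over $Aa_1$. The paper's argument is arranged precisely to avoid this: the auxiliary element $c$ is taken \emph{generic in $G$} over $a$, not a second realization of $p$, so that $ac$ is automatically independent from $a$; the finite stabilizer is used to show that $\cb(\stp(c/ac))$ is interalgebraic with the product $ac$ (the type $\stp(c/ac)$ being a translate of $p^{-1}$ by $ac$); the $CBP$ then makes $\stp(ac/c)$ semiminimal, $a\in\dcl(c,ac)$ transfers this to $\stp(a/c)$, and the independence of $a$ from $c$ finishes. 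The roles of ``canonical base side'' and ``realization side'' are distributed differently from your configuration, and that difference is exactly what makes the non-forking descent legitimate.

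The second problem is the parenthetical claim that finiteness of the stabilizer is invariant under passing to $p^{-1}$ and under swapping left and right. One has $\mathrm{Stab}_{\ell}(p^{-1})=\mathrm{Stab}_{r}(p)$, so your $\mathrm{Stab}(q)$ is the \emph{right} stabilizer of $p$, and in a nonabelian group this can be infinite while the left stabilizer is trivial: in the Heisenberg group with multiplication $(a,b,c)(a',b',c')=(a+a',b+b',c+c'+ab')$, the type of $(x,\alpha,\alpha^{3})$ with $x,\alpha$ generic independent has trivial left stabilizer but right stabilizer $\{(u,0,0):u\in K\}$. This particular slip is repairable (use $b=a_1\cdot a_2$, so that the relevant stabilizer is the left stabilizer of $p$ itself), but the descent problem above remains.
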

\begin{proof} Assume $A = \emptyset$. Let $c\in G$ be generic in $G$ over $a$. As $Stab(p)$ is finite, by Lemma 2.6 of \cite{pil02}, $a$ is interalgebraic with $d = Cb(stp(c/ac))$. By the $CBP$, $stp(d/c)$ is  semiminimal, hence $stp(a/c)$ is semiminimal. As $a$ is independent from $c$ over $\emptyset$, $p$ is semiminimal.

\end{proof}

As was pointed out in \cite{PZ03} the $DCF_{0}$ case of Fact 1.3 yields an account of Mordell-Lang for function fields in characteristic $0$, following the lines of Hrushovski's proof \cite{udi96} but  with considerable simplifications.
With suitable definitions the $CBP$ holds for the category of finite $SU$-rank sets in $ACFA_{0}$ and the ``group version" likewise yields a quick account of Manin-Mumford.\\

We should also remark that Chatzidakis deduces some on the face of it stronger statements from the $CBP$. For example, suppose $b = Cb(tp(a/b)$, then $tp(b/acl(a)\cap acl(b))$ is  semiminimal (and again almost internal to the family of nonmodular strongly minimal sets). And in fact always (not necessarily assuming the $CBP$), in this context $tp(b/acl(a)\cap acl(b))$ is analysable in the nonmodular strongly minimal sets.  But we will not be using the latter. Note also that $T$ has the $CBP$ if and only it is has the $CBP$ after adding constants. \\

Note also that the canonical base property can be seen as a generalization of $1$-basedness: $T$ is $1$-based if whenever $b = Cb(tp(a/b))$ then $b\in acl(a)$. This point of view is profitably pursued in \cite{pal-wag}. \\

We will assume familarity with the basics of stability theory for which \cite{pil-book} is a reference. Definability means possibly over  parameters and $A$-definable means definable with parameters from $A$. It will be convenient, especially in so far as the results assuming properties of definable Galois groups are concerned, to place further restrictions on the theory $T$. \\
\newline
{\bf ASSUMPTION.} $T$ is a complete stable theory, $M$ a saturated model, and there is a fixed collection  ${\mathcal D}$ of strongly minimal sets defined over $\emptyset$ such that any type (over any set of parameters) is nonorthogonal to some $D\in {\mathcal D}$. \\

Under this assumption one knows that all definable sets in $M^{eq}$ have finite Morley rank ( and in fact Morley rank is definable and equals $U$-rank). 
Let ${\mathcal D}_{nm}$ denote the set of nonmodular strongly minimal sets in $\mathcal D$. Let${\mathcal D}^{eq}$ be the elements in $M^{eq}$ in the definable closure of tuples taken from various $D\in {\mathcal D}$. Likewise for $({\mathcal D}_{nm})^{eq}$. 
\\

Generally we work in $M^{eq}$, and $a,b,c,..$ range over such elements. $A, B, ..$ range over small (usually finite) subsets of $M^{eq}$. As usual we feel free to identify formulas and definable sets.
\begin{defi} (i) Let $X$ be a definable set. We say that $X$ is {\em internal} to $\mathcal D$ if there is a definable set $Y$ in ${\mathcal D}^{eq}$ and a definable bijection between $X$ and $Y$.
\newline
(ii) We say that $X$ is almost internal to $\mathcal D$ if there is definable $Y$ in ${\mathcal D}^{eq}$ and a surjective definable function $f$ from $X$ to $Y$ with finite fibres.
\newline
(iii) We say that the stationary type $p(x) = tp(a/A)$ is (almost) internal to ${\mathcal D}$ if some formula $\phi(x)\in p(x)$ is.
\newline
(iv) Likewise for $({\mathcal D}_{nm})^{eq}$. 
\end{defi}

The important thing in part (i) of the definition is that $X$ could be $\emptyset$-definable, and internal to $\mathcal D$ but any $f$ witnessing it needs additional parameters for its definition. \\

\begin{remark} Definition 1.4 (ii) agrees with the ``usual" definition of a stationary type $p(x)\in S(A)$ being (almost) internal to $\mathcal D$: namely that for some $B\supseteq A$, and $a$ realizing the nonforking extension of $p$ over $B$, there is a tuple $c$ of elements from various $D\in \mathcal D$ such that $a\in  dcl(B,c)$ ($a\in acl(B,c)$). 
\end{remark}

The following says that any type is {\em analyzable} in $\mathcal D$. 
\begin{fact} Work over any given set of parameters, algebraically closed if you want. For any $a\notin acl(\emptyset)$ there are $a_{0},..,a_{n}\in dcl(a)$ such that $stp(a_{0})$ is nonalgebraic and internal to $\mathcal D$, $tp(a_{i}/a_{0}...a_{i-1})$ is (nonalgebraic, stationary, and)  internal to $\mathcal D$ for all $i=1,..,n$ and $a\in acl(a_{0},..,a_{n})$ (in fact $tp(a/a_{0}...a_{n-1})$ is almost internal to $\mathcal D$).
\end{fact}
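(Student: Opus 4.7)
The plan is to argue by induction on $\RM(\tp(a/A))$, working over an algebraically closed base $A$ (we may always algebraically close). The main point is to produce $a_0$; once $a_0 \in \dcl(Aa) \setminus \acl(A)$ with $\tp(a_0/A)$ stationary and $\mathcal D$-internal is in hand, either $a \in \acl(Aa_0)$ (stop with $n=0$), or $\RM(a/Aa_0) < \RM(a/A)$ by Lascar's equality (since $a_0 \in \dcl(Aa)$ and $\RM(a_0/A) \geq 1$), in which case the induction hypothesis applied over $\acl(Aa_0)$ supplies $a_1, \ldots, a_n \in \dcl(Aa_0 a) = \dcl(Aa)$ completing the analysis, with $\tp(a/Aa_0 \ldots a_{n-1})$ almost $\mathcal D$-internal and $a \in \acl(Aa_0 \ldots a_n)$.

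To extract $a_0$: by the standing assumption, $\tp(a/A)$ is nonorthogonal to some $D \in \mathcal D$, which unpacks to the existence of a set $B \supseteq A$ with $a \ind_A B$ (after replacing $a$ by an $A$-conjugate of the original witness) and an $e$ realizing the nonforking extension of $D$ to $B$, such that $a \nind_B e$. Since $a \ind_A B$ and $a \notin \acl(A)$ force $a \notin \acl(AB)$, I enlarge the base from $A$ to $\acl(AB)$ and relabel; then $\tp(e/Aa)$ forks over $A$ and $e$ still realizes the nonforking extension of $D$ to the new $A$. Set $a_0 = \cb(\tp(e/Aa))$. Forking of $\tp(e/Aa)$ over $A$ places $a_0$ in $\dcl(Aa) \setminus \acl(A)$, while $\tp(a_0/A)$ is stationary (since $A$ is algebraically closed) and internal to $D$ because $a_0$ lies in the definable closure of a Morley sequence of $\tp(e/Aa)$ over $Aa$, whose terms all realize the nonforking extension of $D$ to $A$.

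The main difficulty is this extraction step: converting the abstract nonorthogonality of $\tp(a/A)$ with $D$ into a concrete nonalgebraic $\mathcal D$-internal element of $\dcl(Aa)$. Two ingredients make it work: (i) we freely enlarge $A$ to absorb the nonorthogonality witness $B$, so that the relevant canonical base actually lies in $\dcl(Aa)$; and (ii) the canonical base of a type supported on $D$-realizations is itself $D$-internal, via its definability over a Morley sequence of the type. The induction is then routine.
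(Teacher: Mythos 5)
The paper states this as a Fact without proof (it is a standard result, essentially from Chapter 7 of \cite{pil-book}), so there is no in-paper argument to compare against; I assess your proposal on its own. The induction on Morley rank is the right shape, and obtaining a $\mathcal D$-internal element as the canonical base of a type of $D$-points (hence lying in $\mathcal D^{eq}$ via a Morley sequence) is also the right idea. But the central step has a genuine gap: you absorb the nonorthogonality witness $B$ into the base and ``relabel''. The element $a_0=\cb(\stp(e/\acl(AB)a))$ you produce lies in $\dcl(\acl(AB),a)$, and in general it lies in neither $\dcl(Aa)$ nor $\acl(Aa)$; the relabelling does not recover this, it silently changes the conclusion to ``there exist $B\ind_A a$ and an analysis of $a$ over $\acl(AB)$'', which is strictly weaker and not what the paper needs (in the proof of Theorem 2.1 it is essential that the element $d$ supplied by Fact 1.7 lie in $\dcl(c)$ over the original base, so that $\acl(d,\mathcal D)\subseteq\acl(c,\mathcal D)$). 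Concretely, in $DCF_0$ take $A=\acl(\emptyset)$, let $a$ be a solution of $\delta(x)=1$ and $D$ the constants: $\tp(a)$ is nonorthogonal to $D$, but every witness to this requires an auxiliary solution $a'$, and your recipe outputs $a_0=a-a'$, which is not even in $\acl(a)$ (automorphisms over $a$ move $a'$ among infinitely many solutions), although a correct $a_0$ (namely $a$ itself) exists.

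The missing ingredient is precisely the hard part of the Fact: the descent from ``$\tp(a/A)$ is not foreign to $\mathcal D$'' (i.e.\ $a\nind_B e$ for some $B\ind_A a$ and some $e$ from $\mathcal D$ generic over $B$) to the existence of a nonalgebraic $a_0\in\dcl(Aa)$ with $\stp(a_0/A)$ internal to $\mathcal D$. This is Hrushovski's foreign/internal dichotomy and requires an additional argument (definability of types, or germs/binding groups), not merely a base change. There are also smaller untreated points: $\tp(e/Aa)$ need not be stationary, so one must take $\cb(\stp(e/Aa))\subseteq\acl(Aa)$ and then code the finite set of conjugates to land in $\dcl(Aa)$; and the stationarity of $\tp(a_i/a_0\cdots a_{i-1})$ over the tuple itself, rather than over its algebraic closure, needs the same coding device. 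These are routine compared with the descent step, which is where the proof genuinely breaks.
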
 

In particular one has:
\begin{fact} For any $a\notin acl(\emptyset)$ there is $b\in dcl(a)$ with $RM(tp(b)) < RM(tp(a))$, and $a'\in dcl(a)$ such that $tp(a'/b)$ is internal to $\mathcal D$ and $a\in acl(b,a')$.
\end{fact}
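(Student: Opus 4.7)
The plan is to repackage the $\mathcal{D}$-analysis provided by Fact 1.6. Applying that fact to $a$, I obtain elements $a_0, \ldots, a_n \in \dcl(a)$ with $\stp(a_0)$ nonalgebraic and internal to $\mathcal{D}$, each $\tp(a_i / a_0 \cdots a_{i-1})$ nonalgebraic, stationary and internal to $\mathcal{D}$ for $1 \leq i \leq n$, and $a \in \acl(a_0, \ldots, a_n)$.

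In the main case $n \geq 1$, I set $b := (a_0, \ldots, a_{n-1})$ and $a' := a_n$. Then $b, a' \in \dcl(a)$ by construction, the internality of $\tp(a'/b)$ is exactly what Fact 1.6 supplies at the last step, and $a \in \acl(b, a')$ is immediate. Only the Morley rank inequality needs an argument: under the standing assumption $\RM$ coincides with $U$-rank on $M^{eq}$ and is therefore additive on stationary types, so using nonalgebraicity of $\tp(a_n/b)$ one has $\RM(\tp(b, a_n)) = \RM(\tp(b)) + \RM(\tp(a_n/b)) > \RM(\tp(b))$, while $\RM(\tp(a)) \geq \RM(\tp(b, a_n))$ because $(b, a_n) \in \dcl(a)$.

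The degenerate case $n = 0$, where $\tp(a)$ is itself almost internal to $\mathcal{D}$, is handled by taking $b$ in $\acl(\emptyset)$ (so that $\RM(\tp(b)) = 0 < \RM(\tp(a))$, using $a \notin \acl(\emptyset)$) and $a' := a_0$. Then $\tp(a_0/b)$ is a nonforking extension of the internal type $\stp(a_0)$ and so is still internal to $\mathcal{D}$, and $a \in \acl(a_0) \subseteq \acl(b, a')$.

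I do not anticipate a substantive obstacle: Fact 1.7 is essentially the unpacking of the final layer of the analysis supplied by Fact 1.6, with the strict rank drop forced by additivity of Morley rank together with the nonalgebraicity of the last step of the analysis.
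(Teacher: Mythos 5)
The paper gives no proof of Fact 1.7 --- it is stated as an immediate consequence of Fact 1.6 (``In particular one has:'') --- and your derivation is exactly the intended unpacking: take $b=(a_0,\dots,a_{n-1})$ and $a'=a_n$ from the analysis, with the strict rank drop coming from additivity of $U$-rank ($=\RM$ under the standing assumption) and nonalgebraicity of the last step. Your argument is correct, including the degenerate case (where, per the convention of Fact 1.6, one may work over an algebraically closed base so stationarity of $\tp(a'/b)$ is not an issue).
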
 

\begin{remark} If $tp(a/A)$ is almost internal to $\mathcal D$ then there is $b\in dcl(aA)$ such that $tp(b/A)$ is internal to $\mathcal D$ and $a\in  acl(b,A)$. 
\end{remark}

We repeat the definition of the canonical base property in the current framework:
\begin{defi} $T$ has the $CBP$ if (working over any set of parameters), if $tp(a/B)$ is stationary and $b = Cb(tp(a/b))$ then $stp(b/a)$ is almost internal to $\mathcal D$.
\end{defi}

As remarked earlier this is equivalent to requiring that $stp(b/a)$ is almost internal to ${\mathcal D}_{nm}$ (because by Chatidakis' general results 
$stp(b/a)$ is analysable in ${\mathcal D}_{nm}$).\\

Now we recall {\em definable Galois groups} (sometimes called binding groups or liason groups). The theory has origins in works of Zilber, with 
further inputs from Hrushovski and Poizat. A general account appears in Chapter 7 of \cite{pil-book}. 
\begin{fact}  Suppose $p(x)\in S(A)$ is internal to $\mathcal D$. Then there is an $A$-definable group $G$ and an $A$-definable action of $G$ on the set $Y$ of realizations of $p(x)$ which is naturally isomorphic (as a group action) to the group of permutations of $Y$ induced by automorphisms of $M$ which fix pointwise $a$ and all $D\in {\mathcal D}$. Moreover $G$ is itself internal to $\mathcal D$.
\end{fact}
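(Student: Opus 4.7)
The plan is to carry out the classical binding/liaison group construction, following Chapter 7 of \cite{pil-book}, specialised to the present setting. Write $Y$ for the set of realisations of $p$. I would define $G$ abstractly as the group of permutations of $Y$ induced by those automorphisms of $M$ that fix $A$ pointwise and each $D\in\mathcal D$ setwise, and then show that $G$ and its action on $Y$ are $A$-definable.

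First, using internality of $p$ over $A$, one extracts a finite $B\supseteq A$ with $B$ independent from $Y$ over $A$, a finite tuple $e=(a_1,\dots,a_n)$ of realisations of $p$, and an $A$-definable formula $\phi(x,\bar y,\bar z)$ such that every realisation of $p$ lies in $\dcl(B,e,\bar d)$ for some tuple $\bar d$ from $\bigcup\mathcal D$, via $\phi$. Consequently any automorphism $\sigma$ fixing $A\cup\bigcup\mathcal D$ pointwise is determined on $Y$ by $\sigma(e)$, and the assignment $\sigma\mapsto\sigma(e)$ embeds $G$ into $Y^n$. Conversely, any $e'\in Y^n$ with $\tp(e'/A\cup\bigcup\mathcal D)=\tp(e/A\cup\bigcup\mathcal D)$ arises from such a $\sigma$, by $\aleph_0$-saturation of $M$. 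Hence the image of $G$ in $Y^n$ is exactly the set of such $e'$, which is $A$-type-definable; the group operation and the action on $Y$ are likewise $A$-type-definable, read off from compositions of the formulas $\phi$.

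Next, I would upgrade ``type-definable'' to ``definable''. Under our standing assumption $T$ is totally transcendental of finite Morley rank, so by the standard result that a type-definable group in a totally transcendental theory is an intersection of definable groups, and by the finiteness of Morley rank, a single formula suffices. The uniqueness of a generic type then lets one descend the parameters back to $A$, yielding an $A$-definable group $G$ and an $A$-definable action on $Y$ which, by construction, is naturally isomorphic to the abstract permutation group described in the statement.

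Finally, for the ``Moreover'' clause: each $g\in G$ is coded by $g\cdot e=(g\cdot a_1,\dots,g\cdot a_n)$, and each coordinate $g\cdot a_i$ lies in $\dcl(B,e,\bar d_i)$ for some tuple $\bar d_i$ from $\bigcup\mathcal D$ by the internality witness. Hence $g\in\dcl(B,e,\bar d_1,\dots,\bar d_n)$, and choosing $B\cup\{e\}$ independent of $g$ over $A$ shows that $\tp(g/A)$ is internal to $\mathcal D$, so $G$ is internal to $\mathcal D$. The main obstacle I anticipate is the passage from type-definable to $A$-definable: one must be careful to descend parameters without losing the naturality of the action, and this is where finite Morley rank is used essentially, both to pin $G$ down by its generic and to guarantee that the type-definable orbit equivalence relation is in fact definable.
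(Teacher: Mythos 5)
The paper does not actually prove this Fact; it is quoted from the literature (Chapter 7 of \cite{pil-book}), so your proposal can only be measured against the standard binding-group construction that you are explicitly following. Your outline is the right one, but it contains one genuine gap at the central step, plus a definitional slip. The slip: you first define $G$ via automorphisms fixing $A$ pointwise and each $D\in\mathcal D$ \emph{setwise}; since the sets in $\mathcal D$ are $\emptyset$-definable this setwise condition is vacuous and you would be looking at all of $\mathrm{Aut}(Y/A)$. The intended group (and the one you in fact use two sentences later) is the one induced by automorphisms fixing $A$ and every $D\in\mathcal D$ \emph{pointwise}.

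The gap: from ``every realisation of $p$ lies in $\dcl(B,e,\bar d)$ for some $\bar d$ from $\bigcup\mathcal D$'' it does not follow that an automorphism $\sigma$ fixing $A\cup\bigcup\mathcal D$ pointwise is determined on $Y$ by $\sigma(e)$. Writing $c=f(b,e,\bar d)$ with $b$ enumerating $B$, one gets $\sigma(c)=f(\sigma(b),\sigma(e),\bar d)$, and $\sigma$ need not fix $B$: in general $B$ properly extends $A$, and that is the whole point of the ``internal but not over $A$'' phenomenon. What is needed is the stronger \emph{fundamental system of solutions} lemma: one can choose a finite tuple $e$ of realisations of $p$ such that every realisation of $p$ lies in $\dcl(A,e,\bigcup\mathcal D)$ --- over $A$ alone, the auxiliary parameters $B$ having been absorbed. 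Establishing that reduction (for instance by taking $e$ to be a sufficiently long Morley sequence in $p|B$ and using definability of types and compactness to eliminate $B$) is the real stability-theoretic content behind Fact 1.10, and it is exactly the step missing from your sketch; without it neither the injectivity of $\sigma\mapsto\sigma(e)$ nor the $A$-invariance (hence $A$-(type-)definability) of the image is available. Once that lemma is in place, the rest of what you write --- identification of the image as an $A$-invariant type-definable set, passage from type-definable to definable via the chain condition on definable subgroups in a totally transcendental theory, and internality of $G$ because it embeds $A$-definably into $Y^{n}$ --- is correct and standard.
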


We will call any such group $G$ as in Fact 1.10 a {\em definable Galois group} in $T$.

\begin{defi} Let $G$ be a definable group, defined over $A$ say. We say that $G$ is {\em rigid} if every definable, connected subgroup of $G$ is defined over $acl(A)$.
\end{defi} 

\vspace{2mm}
\noindent
The following comments are either obvious or left as exercises.
\begin{remark} (i) Any rigid group (of finite Morley rank) must by nilpotent-by-finite.
\newline
(ii) $1$-based groups are rigid.
\newline
(iii) Strongly minimal groups are rigid.
\newline
(iv) In $ACF_{0}$ a connected definable group (i.e. connected algebraic group) is rigid iff it is an extension of an abelian variety by a product of the additive group with an algebraic torus.  
\end{remark}





Finally a word on attributions: The first author circulated a one and a half page note (in late 2011) outlining a counterexample to the $CBP$ and also remarking among other things that rigidity of definable Galois groups should imply the $CBP$. The second and third author checked the details of the counterexample, and also noted that the same configuration witnessing failure of the $CBP$ also witnesses the failure of the ``group version" Fact 1.3. This appears in Section 3 of the current paper which is a mild simplification of the original example. They also found a proof of the $CBP$ (in fact a strong version) under the rigidity hypothesis. This, together with a certain ``local" consequence of rigidity of a Galois group, as well as a suitable generalization to arbitrary stable theories, appears in Section 2.

\section{Rigidity of definable Galois groups implies the $CBP$}

We work under the ASSUMPTION from section 1, and prove:

\begin{thm} Suppose all definable Galois groups in $T$ are rigid. Then $T$ has the canonical base property: in fact in the strong form that if $b = Cb(stp(a/b))$ then $b\in acl(a,{\mathcal D})$

\end{thm}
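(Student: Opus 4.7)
The plan is to proceed by induction on $\RM(b)$. The base case $\RM(b)=0$ is immediate, since then $b\in\acl(\emptyset)\subseteq\acl(a,\mathcal D)$.

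For the inductive step, apply Fact 1.7 to $b$ to obtain $b_1\in\dcl(b)$ with $\RM(b_1)<\RM(b)$, together with $b_2\in\dcl(b)$ such that $\tp(b_2/b_1)$ is internal to $\mathcal D$ and $b\in\acl(b_1,b_2)$. Set $c=\cb(\stp(a/b_1))$. Then $c\in\dcl(b_1)$, $\RM(c)\leq\RM(b_1)<\RM(b)$, and the minimality property of canonical bases yields $c=\cb(\stp(a/c))$ (any canonical base of $\stp(a/c)$ is at most $c$ by definition, and must equal $c$ since any smaller base would give a smaller base for $\stp(a/b_1)$ too). The inductive hypothesis applied to the pair $(a,c)$ therefore gives $c\in\acl(a,\mathcal D)$, and the theorem reduces to proving $b\in\acl(ac,\mathcal D)=\acl(a,\mathcal D)$.

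For this core step I would invoke Fact 1.10 applied to $\tp(b_2/b_1)$: let $G$ be the resulting definable Galois group, which is $\acl(b_1)$-definable, internal to $\mathcal D$, and --- by the main hypothesis --- rigid, so that every connected definable subgroup of $G$ is $\acl(b_1)$-definable. The subgroup $H\leq G$ induced by automorphisms in $\Aut(M/ab_1\mathcal D)$ is type-definable over $\acl(ab_1)$. Using $a\ind_c b_1$ (from the definition of $c$), the action of $\Aut(M/ac\mathcal D)$ on $b$ factors, via the interalgebraicity $b\leftrightarrow(b_1,b_2)$, through the Galois group $G$. The canonical base property $b=\cb(\stp(a/b))$ now plays its role: for any $\sigma\in\Aut(M/ac\mathcal D)$, the image $\sigma(b)$ is itself the canonical base of $\stp(a/\sigma(b))$, and distinct such images would give genuinely distinct canonical bases. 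If the $ac\mathcal D$-orbit of $b$ were infinite, rigidity of $G$ would produce a nontrivial connected definable subgroup of $G$ acting on $b$ and thereby generating an infinite family of incompatible canonical bases for $\stp(a/b)$, contradicting canonicity. Hence the orbit is finite, so $b\in\acl(ac,\mathcal D)$.

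The main obstacle is making the descent argument in the last paragraph rigorous. Rigidity of $G$ yields $\acl(b_1)$-definability of connected subgroups, whereas we ultimately need finiteness of an orbit over $\acl(ac)\subseteq\acl(b_1)$; the canonicity $b=\cb(\stp(a/b))$ must supply the bridge closing this gap. Concretely, one needs to track how a hypothetical infinite family of $ac\mathcal D$-conjugates of $b$ translates, via projection to $b_1$ and the $G$-action on the fiber above $b_1$, into distinct canonical bases for the same underlying type $\stp(a/b)$ --- the incompatibility of such a family with $b$ being the canonical base is where the proof must close, and the delicate coupling of Galois-theoretic rigidity with canonical-base data is the technical heart of the argument.
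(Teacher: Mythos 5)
Your overall strategy --- induction, a Fact 1.7 decomposition, and rigidity of the Galois group of an internal type --- is the right family of ideas, but you have set the induction up on the wrong side of the pair, and this creates a gap your sketch does not close. The paper inducts on $\RM(\tp(a))$, the rank of the \emph{realization}, and applies Fact 1.7 to $a$: it produces $d\in\dcl(a)$ of smaller rank with $\tp(a/d)$ almost internal to $\mathcal D$, and takes as the ``lower piece'' of the canonical base the element $b_0=\cb(\tp(d/b))\in\dcl(b)$. The point is that the induction hypothesis, applied to the pair $(d,b_0)$, puts $b_0$ \emph{itself} into $\acl(d,\mathcal D)\subseteq\acl(a,\mathcal D)$. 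In your version you decompose $b$ into $b_1,b_2$ and apply the induction hypothesis to $c=\cb(\stp(a/b_1))$; this gives $c\in\acl(a,\mathcal D)$ but says nothing about $b_1$. That matters because everything your Galois group $G$ of $\tp(b_2/b_1)$ can deliver is relative to $b_1$: $G$, its connected subgroups (rigid only over $\acl(b_1)$), and the orbit of $b_2$ under the Galois group of $\tp(b_2/ab_1)$ are all definable only over $\acl(ab_1\mathcal D)$, so the best possible conclusion is $b\in\acl(a,b_1,\mathcal D)$. To finish you would need $b_1\in\acl(a,\mathcal D)$, which is essentially an instance of the theorem you are proving (for the piece $b_1$ of $b$) and is not supplied by your induction hypothesis. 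In particular, automorphisms over $ac\mathcal D$ need not fix $b_1$, so they do not act through $G$ at all, contrary to what your third paragraph asserts.

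Separately, the descent mechanism you gesture at --- ``an infinite family of incompatible canonical bases for $\stp(a/b)$'' --- is not a valid argument: the conjugates $\sigma(b)$ are canonical bases of the \emph{distinct} types $\stp(a/\sigma(b))$, so their multiplicity creates no conflict with canonicity. The correct mechanism, which is exactly the technical heart you flag as missing, runs as follows in the paper's arrangement. One first proves (Claim II, via a Morley sequence in $\tp(a,d/b)$ --- this is where the hypothesis that $b$ is a canonical base is actually used, and it is not free as it is in your setup) that $\stp(b/b_0)$ is almost internal to $\mathcal D$, and replaces $b$ by an interalgebraic $b'$ with $\tp(b'/b_0)$ internal. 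Letting $L$ be the Galois group of $\tp(b'/b_0a)$ and $H=L^0$, rigidity puts $H$ over $\acl(b_0)$, so the $H$-orbit $Z$ of $b'$ is definable over $\acl(b_0a\mathcal D)$ and its canonical parameter $e$ lies in $\dcl(b)\cap\acl(b_0a\mathcal D)$. Since $\tp(h\cdot b'/b_0a\mathcal D)=\tp(b'/b_0a\mathcal D)$ for $h\in L$, one gets $\tp(a/e)\vdash\tp(a/b')$, hence $\tp(a/b)$ does not fork over $e$, hence $b=\cb(\stp(a/b))\in\acl(e)\subseteq\acl(b_0,a,\mathcal D)=\acl(a,\mathcal D)$. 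Your proof needs both repairs: reorganize the induction so that the base over which the Galois group lives is already known to be in $\acl(a,\mathcal D)$, and replace the orbit-counting heuristic by the canonical-parameter/nonforking argument.
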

\begin{proof} We will use freely elementary closure properties of (almost) internality, as well as basic facts that if some nonforking extension of a (stationary) type $p(x)$ is (almost) internal to $\mathcal D$ then so is $p$. The proof is remarkably similar to that of the first authors ``socle theorem" in \cite{udi96} proving the ``group version" of the $CBP$ (Fact 1.3) under certain rigidity assumptions on the definable group $G$.\\

Work over any base set. Sometimes we use $\mathcal D$ to mean the union of the sets of realizations of all the formulas in $\mathcal D$, an $Aut(M)$-invariant set. Let $c,a$ be such that $tp(c/a)$ is stationary and $a = Cb(tp(c/a))$. We will prove by induction on 
$U(tp(c))$ (= $RM(tp(c))$) that $tp(a/c)$ is almost internal to $\mathcal D$.\\

Applying Fact 1.7 to $stp(c)$ there is $d\in dcl(c)$ such that $0\leq U(tp(d)) < U(tp(c))$ and $tp(c/d)$ is almost internal to $\mathcal D$. Note that $tp(d/a)$ is stationary. Let $b\in dcl(a)$ be $Cb(tp(d/a))$.  By induction hypothesis we have:
\newline
{\em Claim I.}  $b\in acl(d,{\mathcal D})$ so also $b\in acl(c, {\mathcal D})$. \\

\noindent
{\em Claim II.}  $stp(a/b)$ is almost internal to $\mathcal D$. 
\newline
{\em Proof.} Let $(c_{1}d_{1}, c_{2}d_{2},....)$ be a Morley sequence in $tp(c,d/a)$. So $a\in dcl(c_{1},d_{1},..,c_{n},d_{n})$ for some $n$. 
 Now
$stp(c_{1},..,c_{n}/d_{1}....d_{n})$ is almost internal to $\mathcal D$, hence $stp(a/d_{1},..,d_{n})$ is almost internal to $\mathcal D$, so also $stp(a/d_{1},..,d_{n},b)$ is almost internal to $\mathcal D$.

But $(d_{1}, d_{2},...)$ is a Morley sequence in $tp(d/a)$, hence $a$ is independent from $d_{1},...,d_{n}b$ over $b$, so 
 $stp(a/b)$ is almost internal to $\mathcal D$, as required. \\
 
 There is no harm in adding something in $acl(b)\cap dcl(ab)$ to $b$ so we will assume $tp(a/b)$ is stationary. By Remark 1.8 let $a'\in dcl(a)$ be such that $tp(a'/b)$ is internal to $\mathcal D$, $b\in dcl(a')$, and $a\in acl(a')$.
  Let $G_{b}$ be the definable Galois group corresponding to $tp(a'/b)$ given by Fact 1.10. Namely we have a $b$-definable action of $G_{b}$ on the set $Y$ of realizations of $tp(a'/b)$ isomorphic to the action of $Aut(M/b,{\mathcal D})$ (quotiented by the pointwise fixator of $Y$) on $Y$.
  Of course $tp(a'/cb)$ is also internal to $\mathcal D$, with corresponding Galois group $L$ say. The functoriality of the Galois actions gives $L$  as a subgroup of $G_{b}$ and its action on the set of realizations of $tp(a'/cb)$ by automorphisms which fix pointwise $bc$ and all sets in $\mathcal D$ is induced by the action of $G_{b}$ on $Y$ by automorphisms which fix pointwise $b$ and the sets in $\mathcal D$, so this action is also $b$-definable. 
Let $H$ be the connected component of $L$, so by the rigidity assumption $H$ is defined over $acl(b)$. There is no harm in assuming it to be defined over $b$ and we write $H$ as $H_{b}$. Note that the orbit of $a'$ under $L$ is precisely the set of realizations of $tp(a'/bc,{\mathcal D})$. Hence the orbit of $a'$ under $L$ is definable over $b,c, {\mathcal D}$. This orbit breaks into finitely many orbits under $H_{b}$. Hence if $Z$ denotes
the orbit of $a'$ under $H_{b}$ we have:
\newline 
{\em Claim III.} $Z$ is definable over $acl(bc{\mathcal D}$). \\

Let $e$ be a canonical parameter for $Z$. So note $e\in dcl(ab) = dcl(a)$, and by Claim III, $e\in acl(bc,{\mathcal D})$. \\

\noindent
{\em Claim IV.}  $tp(c/e)$ implies $tp(c/a')$.
\newline
{\em Proof.}  Note that $tp(h\cdot a'/b,c,{\mathcal D}) = tp(a'/b,c,{\mathcal D})$ for any $h\in H_{b}$. 
So if $\phi(x,y)$ is a formula such that $M\models \phi(c,a')$ then $M\models \phi(c,a'')$ for all $a''\in Z$. Now the formula $\psi(x)$ which says that
$\forall y\in Z(\phi(x,y))$ is over $e$ so we write it as $\psi(x,e)$. And clearly $M\models \psi(x,e) \rightarrow \phi(x,a')$. So Claim IV is established.\\

As $a\in acl(a')$, $tp(c/a)$ does not fork over $a'$. By Claim IV, $tp(c/a)$ does not fork over $e$. Hence as $a = Cb(tp(c/a))$, $a\in acl(e)$. Together with Claim III, we obtain:\\

\noindent
{\em Claim V.} $a\in acl(b,c, {\mathcal D})$.\\

\noindent
By Claim I, we conclude that $a\in acl(c, {\mathcal D})$, as required. 
\end{proof}

\begin{remark} It is not hard to find examples (say in differentially closed fields) where the $CBP$ is true but the conclusion of Theorem 2.1 fails.

\end{remark}

We finish this section with a response to a question of Rahim Moosa as to what the ``local" content of the proof above is. Namely what are the consequences, regarding canonical bases and internality, of the rigidity of a given definable Galois group.  We will give an answer and then give a suitable generalization of Theorem 2.1 to arbitrary stable theories. 
So we consider now a general stable theory $T$. Again we work in a saturated model $M$ of $T$. Let $Q$ denote some $Aut(M)$ invariant subset of $M^{eq}$ (such as the set of realizations of a partial type over $\emptyset$).  Remark 1.5 gives the definition of the (almost) internality of a stationary type $p(x) \in S(A)$ to $Q$. If $p(x)\in S(A)$ is internal to $Q$ then as in Fact 1.10 there is a type-definable over $A$ group $G$ and $A$-definable action of $G$ on the set of realizations $Y$ of $p$, isomorphic (functorially) to the group of permutations of $Y$ induced by automorphisms of $M$ which fix $A$ and $Q$ pointwise. We will say that $G$ is rigid if any connected type-definable subgroup of $G$ is type-definable over $acl(A)$.
With this notation we have the following:\\

\begin{lem} ($T$ stable.) Suppose $tp(a)$ is internal to $Q$, and suppose moreover that the type-definable Galois group $G$ is rigid. Then for any $c$, if $b =  Cb(stp(c/a))$, then $b\in acl(c,Q)$. 
\end{lem}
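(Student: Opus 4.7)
The plan is to imitate the Galois-theoretic portion of the proof of Theorem 2.1, with $a$ itself playing the role there played by $a'$. The inductive rank reduction at the start of that proof (through $d \in \dcl(c)$) is unnecessary here, since $tp(a)$ is $Q$-internal from the outset, not merely analyzable. Work over $\emptyset$, so that $G$ is type-definable over $\emptyset$ and acts on the set $Y$ of realizations of $tp(a)$.

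Since internality is preserved when adding parameters, $tp(a/c)$ is also $Q$-internal and carries its own type-definable Galois group $L$, acting over $c$ on the realizations of $tp(a/c)$. Functoriality of the Galois correspondence embeds $L$ as a type-definable-over-$c$ subgroup of $G$. Let $H$ be the connected component of $L$. Then $H$ is a connected type-definable subgroup of $G$, so by the rigidity of $G$ it is type-definable over $\acl(\emptyset)$, which we may take to be $\emptyset$ after naming parameters.

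Consider the orbit $Z := H \cdot a$. Since $H$ is $\emptyset$-type-definable, $Z$ is type-definable over $a$; on the other hand, $Z$ is one of the (boundedly many) $H$-orbits inside the $L$-orbit of $a$, which is itself the set of realizations of $tp(a/c, Q)$. These $H$-orbits are permuted by $\Aut(M/c, Q)$, so $Z$ is type-definable over $\acl(c, Q)$. Let $e$ be the canonical parameter of $Z$; then $e \in \dcl(a) \cap \acl(c, Q)$. The crux is then the exact analogue of Claim IV from Theorem 2.1: $tp(c/e) \vdash tp(c/a)$. For any formula $\phi(x, a) \in tp(c/a)$ and any $a'' \in Z$, the $H$-action furnishes an automorphism fixing $c$ and $Q$ pointwise and sending $a$ to $a''$, whence $\models \phi(c, a'')$ for every $a'' \in Z$. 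The partial type ``$\forall y \in Z\, \phi(x, y)$'' is over $e$, holds of $c$, and implies $\phi(x, a)$. Hence $tp(c/a)$ does not fork over $e$, so $b = \cb(\stp(c/a)) \in \acl(e) \subseteq \acl(c, Q)$, as required.

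The main obstacle is purely technical: in a general stable theory $[L:H]$ is only of bounded cardinality (not necessarily finite), so one must verify that the $H$-orbits inside $L \cdot a$ are really type-definable over $\acl(c, Q)$ rather than merely over $\bdd(c, Q)$, and that canonical parameters of type-definable sets exist as imaginaries (or hyperimaginaries) which behave correctly with respect to forking and the canonical base of $\stp(c/a)$. These are standard facts about bounded-index subgroups and canonical bases in stable theories, so the argument should go through with only cosmetic changes from the totally transcendental proof of Theorem 2.1.
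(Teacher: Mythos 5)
Your proposal is correct and follows essentially the same route as the paper's own (sketch) proof: pass to the Galois group $L$ of $tp(a/c)$ inside $G$, use rigidity to put its connected component $H$ over $\acl(\emptyset)$, take the canonical parameter $e$ of the $H$-orbit of $a$, and run the analogue of Claim~IV to get $\cb(\stp(c/a))\in\acl(e)\subseteq\acl(c,Q)$. The only (immaterial) difference is that the paper works with $tp(a/\acl(c))$ rather than $tp(a/c)$, and your closing remarks on bounded index and canonical parameters of type-definable sets address exactly the points the paper leaves implicit by calling its argument a sketch.
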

\begin{proof} We give a sketch proof as it really is a rewriting of Claims III, IV, V, in the proof of Theorem 2.1. Note that $b$ is now a possibly infinite tuple. Let $L$ be the Galois group corresponding to $tp(a/acl(c))$ which is still internal to $Q$. Then the orbit of $a$ under $L$ is type-definable over $(acl(c),Q)$. 
Let $H$ be the connected component of $L$. Then $H$ is type-definable over $acl(\emptyset)$, and the orbit of $a$ under $H$ is definable over $acl(c,Q)$.
Now the orbit of $a$ under $H$ has also a ``canonical parameter" $e$ which may be an infinite sequence. Hence $e\in acl(c,Q)$. Note that 
$e\in acl(a)$. As in the proof of Claim IV above, $tp(c/e)$ implies $tp(c/a)$. Hence $tp(c/acl(a))$ does not fork over $e$, and so $b = Cb(stp(c/a))$ is in $acl(e)$.  Hence  $b\in acl(c,Q)$ as required.
\end{proof}

The Lemma above also holds when $a$ is a possibly infinite tuple (and so $G$ is a so-called $*$-definable group). \\

Now assume $tp(a)$ is stationary. We say that $tp(a)$ is {\em analysable} in $Q$ if there is a sequence $a_{\alpha}$ $\alpha \leq \beta$ such that
$stp(a_{\alpha}/\{a_{\gamma}:\gamma < \alpha\})$ is internal to $Q$ for all $\alpha \leq\beta$, and $a\in acl(a_{\alpha}:\alpha\leq \beta)$.   In fact 
the $a_{\alpha}$ may be infinite tuples, and one can assume they are in $dcl(a)$. We define $\ell_{1}^{Q}(a)$ to be the maximal (possibly infinite) tuple $b$ in $acl(a)$, such that $stp(b)$ is internal to $Q$. Then an adaptation of Theorem 3.4(2) of \cite{pal-wag} yields:
\begin{fact} ($T$ stable.) Assume $tp(a)$ is analysable in $Q$. Then  $\ell_{1}^{Q}(a)$ {\em dominates} $a$ over $\emptyset$: whenever $c$ is independent from $\ell_{1}^{Q}(a)$ over $\emptyset$, then $c$ is independent from $a$ over $\emptyset$. 
\end{fact}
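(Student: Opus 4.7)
The plan is to prove the fact by transfinite induction on the length $\beta$ of a $Q$-analysis $(a_{\alpha})_{\alpha \leq \beta}$ of $\tp(a)$, chosen with $a_{\alpha} \in \dcl(a)$ and $a \in \acl(a_{\alpha} : \alpha \leq \beta)$. Set $\ell = \ell_{1}^{Q}(a)$ and assume $c \ind \ell$; one aims to show $c \ind a$. In the base case $\beta = 0$, the type $\stp(a)$ is itself internal to $Q$, hence $a \subseteq \acl(\ell)$ and the conclusion is immediate. The limit case reduces to the successor case via finite character of forking, so the heart of the argument is the successor step.

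For $\beta = \gamma+1$, write $B = a_{\leq \gamma}$ and $b = a_{\gamma+1}$, and observe that $\ell_{1}^{Q}(B) \subseteq \ell$. The induction hypothesis applied to $B$, together with $c \ind \ell \supseteq \ell_{1}^{Q}(B)$, yields $c \ind B$. What remains is to show $c \ind b/B$, and this reduces to the following \emph{local claim}: whenever $\stp(b/B)$ is internal to $Q$, $c \ind B$, and $c \ind \ell_{1}^{Q}(Bb)$, then $c \ind b/B$.

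To establish the local claim I would argue by contradiction: assume $c \nind b/B$ and set $d = \cb(\stp(c/\acl(Bb)))$, so $d \in \acl(Bb) \setminus \acl(B)$ and $c \nind d/B$. Since $\stp(b/B)$ is internal to $Q$ and $d \in \acl(Bb)$, the type $\stp(d/B)$ is also internal to $Q$. The goal is to replace $d$ by an interalgebraic element $d^{\ast}$ with $\stp(d^{\ast})$ (over $\emptyset$) internal to $Q$, placing $d^{\ast}$ in $\ell_{1}^{Q}(Bb)$ and giving the desired contradiction with $c \ind \ell_{1}^{Q}(Bb)$. For this, pick internality witnesses: $B' \supseteq B$ with $B' \ind_{B} bcd$ and a tuple $\bar q$ from $Q$ with $d \in \dcl(B'\bar q)$. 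Since $c \ind B$ and $c \ind B'/B$, transitivity gives $c \ind B'$. Then one analyses $\cb(\stp(c / B'\bar q))$ and uses $c \ind B'$ to extract $d^{\ast}$ as an $\emptyset$-internal canonical parameter that still forks with $c$.

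The main obstacle is precisely this last extraction. The hypothesis only provides $\stp(d/B)$ internal over $B$, whereas membership in $\ell_{1}^{Q}(Bb)$ requires $\stp$ internal over $\emptyset$. Bridging that gap is where $c \ind B$ is used critically, and it is exactly this move that adapts Theorem 3.4(2) of \cite{pal-wag}. It parallels, with a canonical-base twist but without any rigidity requirement, the role played by the connected component of the Galois group in the proof of Theorem 2.1.
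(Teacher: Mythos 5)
First, a point of reference: the paper does not actually prove this Fact --- it is imported by citation (``an adaptation of Theorem 3.4(2) of \cite{pal-wag} yields\ldots''), so you are supplying an argument the authors deliberately outsourced. Your overall architecture is the standard and correct one for such domination statements: induct on the length of the analysis, dispose of limits by finite character, and reduce the successor step to a one-step ``local claim'' about a type $\stp(b/B)$ internal to $Q$ with $c\ind B$. That bookkeeping is fine (including the observations that $\ell_{1}^{Q}(B)\subseteq\ell_{1}^{Q}(a)$ and that $\acl(Bb)=\acl(a)$ at the top step, so the local claim really does suffice).

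The problem is that the local claim is the entire mathematical content of the Fact, and you do not prove it --- you say so yourself (``the main obstacle is precisely this last extraction''). Concretely: from $d=\cb(\stp(c/\acl(Bb)))$ you correctly get $d\in\acl(Bb)$ with $\stp(d/B)$ almost $Q$-internal and $c\nind d$, but membership in $\ell_{1}^{Q}(a)$ requires $Q$-internality of the strong type \emph{over $\emptyset$}, and nothing in the proposal produces such an element. Moreover, the repair you gesture at does not work as stated: $\cb(\stp(c/B'\bar q))$ lives in $\acl(B'\bar q)$, which is not contained in $\acl(a)$, so it is not even a candidate for $\ell_{1}^{Q}(a)$; and the Morley-sequence description of canonical bases puts it in the definable closure of finitely many realizations of $\stp(c/B'\bar q)$, which makes its type over $\emptyset$ internal to $\tp(c)$, not to $Q$. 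To extract something $Q$-internal over $\emptyset$ one must take a canonical base on the \emph{other} side --- of the type of the internality data ($b$, or the witnesses $B'\bar q$) over the $c$-side --- so that the relevant Morley sequence consists of tuples from $Q$ together with parameters chosen independent over $\emptyset$, and one must then still arrange for the resulting element to land in $\acl(a)$ and to fork with $c$ over $\emptyset$. That is precisely the content of the lemmas leading to Theorem 3.4 in \cite{pal-wag}, and it is absent here. As it stands, the proposal is a correct reduction together with an accurate description of what remains to be proved, not a proof.
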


\begin{thm} ($T$ stable.)  Suppose that for any $b,A$ such that $stp(b/A)$ is internal to $Q$, the corresponding  Galois group is rigid. Then, whenever $tp(a)$ is analysable in $Q$ we have: for any $c$, if $b$ = $Cb(stp(c/a)$, then $b\in acl(c,Q)$. 
\end{thm}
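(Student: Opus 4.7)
The plan is to reduce the statement directly to the infinite-tuple version of Lemma~2.3 by promoting $a$ to the tuple witnessing a $Q$-analysis of $tp(a)$.

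Fix a $Q$-analysis $(a_\gamma)_{\gamma\le\beta}$ of $tp(a)$ with each $a_\gamma\in dcl(a)$ and $a\in acl((a_\gamma)_\gamma)$, and let $\bar a$ denote the resulting (possibly infinite) tuple. Since $stp(a_\gamma/a_{<\gamma})$ is internal to $Q$ for every $\gamma$, a standard closure argument for stable theories---the transitivity of internality, namely that if $stp(u)$ and $stp(v/u)$ are internal to $Q$ then so is $stp(u,v)$, iterated up the analysis---shows that $tp(\bar a)$ is itself internal to $Q$. By the hypothesis of the theorem the associated $*$-definable Galois group is rigid, so Lemma~2.3 applied to $\bar a$ yields $Cb(stp(c/\bar a))\in acl(c,Q)$. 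Since $\bar a\in dcl(a)$ and $a\in acl(\bar a)$, the canonical bases $Cb(stp(c/a))$ and $Cb(stp(c/\bar a))$ are interalgebraic, so $b\in acl(c,Q)$ as required.

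The main obstacle is the closure claim that $tp(\bar a)$ is internal to $Q$: at successor stages this follows from the standard composition lemma for internality in stable theories, but at limit stages one must verify that the witnessing parameter sets can be chosen coherently, so that a single parameter set suffices. Should this coherent choice turn out to be problematic, an alternative plan is to proceed by transfinite induction on the length $\beta$ of the analysis: split off a shorter initial piece $d=(a_\gamma)_{\gamma<\beta}$, apply Lemma~2.3 relativised to $d$ to deduce $b\in acl(c,d,Q)$, apply the inductive hypothesis to $tp(d)$ to obtain $Cb(stp(c/d))\in acl(c,Q)$, and then eliminate the $d$-dependence using $c\ind_e d$ and an argument modelled on Claims~IV and V in the proof of Theorem~2.1---showing that under $Aut(M/c,e,Q)$ the element $b$, being canonically determined by $stp(c/a)$, has only finitely many conjugates.
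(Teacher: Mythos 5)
Your main plan collapses at the ``standard closure argument'': internality is \emph{not} closed under towers. If $stp(u)$ and $stp(v/u)$ are both internal to $Q$, the conclusion is only that $stp(u,v)$ is \emph{analysable} in $Q$ in two steps, not that it is internal; were your composition lemma true, ``analysable in $Q$'' and ``internal to $Q$'' would coincide for finite analyses and the distinction would be vacuous. The counterexample in Section 3 of this very paper has exactly this shape: for generic $a\in S$, $tp(\pi(a))$ and $tp(a/\pi(a))$ are both internal to $P$, yet $tp(a)$ is not even almost internal to $P$ (Lemma 3.4(v)). So the obstacle is not, as you suggest, a coherence issue at limit stages --- it already occurs at the first successor stage --- and there is no way to apply Lemma 2.3 to the full tuple $\bar a$, because $tp(\bar a)$ need not be internal to $Q$ and hence has no Galois group in the sense of Fact 1.10.

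Your fallback plan (induction on the length of the analysis) is closer in spirit to what is needed, but the step you leave vague --- ``eliminating the $d$-dependence'' --- is precisely where the real content lies, and the finitely-many-conjugates argument you gesture at does not obviously close the gap: from $b\in acl(c,d,Q)$ and $Cb(stp(c/d))\in acl(c,Q)$ one cannot directly conclude $b\in acl(c,Q)$, since $b$ lies in $acl(a)\supseteq acl(d)$ and may genuinely involve $d$. The paper avoids the induction altogether by a different device: set $A=acl(c,Q)\cap acl(a)$, let $a'=\ell_{1}^{Q}(a/A)$ be the maximal $Q$-internal part of $a$ over $A$, apply Lemma 2.3 over $A$ to get $Cb(stp(c/A,a'))\subseteq acl(c,A,Q)\cap acl(A,a')\subseteq A$, i.e.\ $c$ independent from $a'$ over $A$, and then invoke the domination statement Fact 2.4 (that $\ell_{1}^{Q}(a)$ dominates $a$) to upgrade this to $c$ independent from $a$ over $A$, whence $Cb(stp(c/a))\subseteq A\subseteq acl(c,Q)$. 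The domination fact is the ingredient your sketch is missing; without it, or some substitute for it, independence information at the first level of the analysis cannot be propagated to $a$ itself.
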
 
\begin{proof} So assume $tp(a)$ is analysable in $Q$, and let $c$ be arbitrary. Let $A = acl(c,Q)\cap acl(a)$, and note that $tp(a/A)$ is still analysable in $Q$. Let $a' = \ell_{1}(a/A)$.  By Lemma 2.3 (working over $A$), and our assumption on the Galois groups, $Cb(stp(c/A,a')) \subseteq
acl(c,A,Q)\cap(acl(A,a') \subseteq acl(c,Q)\cap acl(a) = A$. So $c$ is independent from $a'$ over $A$. Hence by Fact 2.4 (working over $A$), $c$ is independent from $a$ over $A$. Hence $Cb(stp(c/a)) \subseteq A \subseteq acl(c,Q)$, as required.

\end{proof}

\section{The Counterexample}

In this section we will construct an $\aleph_1$-categorical structure of finite rank where the $CBP$ does not hold. In fact the same data will witness failure of the $CBP$ in two different ways, the second way being the failure of Fact 1.3

Before to describe the structure, we shall recall the definition of ``tangent bundle" $TV$ of an irreducible affine algebraic variety $V$ (although unless $V$ is smooth, $TV$ will not actually be a vector bundle). Let $K$ be an algebraically closed field.
\begin{defi} Assume that $V\subseteq K^n$ is an irreducible affine algebraic variety whose ideal over $K$ is generated by $P_1,\ldots,P_m$. The {\em tangent bundle $TV$ of $V$} is the affine algebraic variety contained in $k^{2n}$ and defined by equations $$P_j(\bar x)=0\quad\mbox{ and }\quad\sum_{i=1}^n \frac{\partial P_j}{\partial x_i}(\bar x)u_i=0\quad\mbox{ for $j=1,\ldots,m$}.$$ \end{defi} If $\pi$ is the projection from $TV$ on $V$, the tangent space of $V$ at $a\in V$ is the vector space $T_a V=\pi^{-1}(a)$. Note that, if $V=K^n$ then $TV=K^{2n}$.

\subsection{The structure and its properties} Our counterexample $M$ will be essentially a reduct of an algebraically closed field of characteristic $0$.

So let $K$ be a saturated algebraically closed field of characteristic $0$, so the field of complex numbers if one wishes. The universe of $M$ will be affine $2$-space over $K$, namely $K\times K$. The basic relations and functions on $M$ will consist of:
\newline
(i) a predicate $P$ say for the first copy of $K$, namely  $\{(a,0):a\in K\}$,
\newline
(ii) The full field structure $+,-,\cdot, 0, 1$ on $P$,
\newline
(iii) The projection $\pi$ from $K\times K$ to $P$  (i.e. $\pi(a,b) = (a,0)$).
\newline
(iv) The action $*$ of $P$ on $K\times K$, given by  $a * (b,c) = (b,c+a)$,
\newline
(v)  For each irreducible subvariety $W$ of $K^{n} = P^{n}$ defined over ${\mathbb Q}^{alg}$, a predicate $P_{W}$ for $TW \subset (K\times K)^{n}$. 

\vspace{2mm}
\noindent
It will be convenient to consider $M$ as a two sorted structure $(P,S)$. The sort $P$ is just an algebraically closed field, and we view $S$ as the tangent bundle of $P$ equipped with relations and functions above. $M$ is obviously a reduct of the structure  $(P,P\times P)$ (with the identification of the second sort with the square of the first), so interpretable in $(K,+,\cdot)$. Moreover the induced structure on $P$ is just the field structure. The structure $M$ is clearly saturated.  Note that $S$ is analysable in $2$ steps in $P$: for any $a\in S$, $\pi(a)\in P$ and the fibre $S_{\pi(a)}$ in which $a$ lives is in bijection with $P$ definably over any element in the fibre. Hence $Th(M)$ is $\aleph_{1}$-categorical. \\

We now aim to show that $S$ is not almost internal to $P$. It will be done by observing that $M$ has many automorphisms, acting trivially on $P$, induced by {\em derivations} of $K$. 

\begin{lem} Let $\partial$ be any derivation of $K$. For $(a,b)\in S$,
$\sigma_{\partial}(a,b) = (a, b+\partial(a))$ (so in particular $\sigma$ fixes $P$ pointwise).  Then $\sigma_{\partial}$ is an automorphism of $M$
\end{lem}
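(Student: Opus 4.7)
The plan is to verify that $\sigma_\partial$ is a bijection of $K\times K$ preserving each of the basic relations and functions (i), (ii), (iii), (iv), (v) that define $M$. Since $\sigma_{-\partial}$ is clearly a two-sided inverse of $\sigma_\partial$, bijectivity is immediate. Items (i)--(iv) are essentially trivial: $\sigma_\partial$ fixes the set $P=\{(a,0):a\in K\}$ pointwise (because $\partial$ applied to $0$ is $0$), so the field structure on $P$ is preserved; the projection $\pi(a,b)=(a,0)$ depends only on the first coordinate, which $\sigma_\partial$ fixes; and compatibility with the action $*$ reduces to the identity $\sigma_\partial(b,c+p)=(b,c+p+\partial b)=p*\sigma_\partial(b,c)$ for $p\in P$.

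The genuine content lies in (v): showing that $\sigma_\partial$ preserves the predicate $P_W$ for every irreducible $\mathbb{Q}^{\mathrm{alg}}$-subvariety $W$ of $P^n$. Pick generators $P_1,\dots,P_m\in\mathbb{Q}^{\mathrm{alg}}[x_1,\dots,x_n]$ of the ideal of $W$, so a tuple $((a_1,b_1),\dots,(a_n,b_n))\in S^n$ lies in $P_W=TW$ iff
\[
P_j(\bar a)=0 \quad\text{and}\quad \sum_{i=1}^{n}\frac{\partial P_j}{\partial x_i}(\bar a)\,b_i=0\qquad (j=1,\dots,m).
\]
Applying $\sigma_\partial$ coordinatewise leaves $\bar a$ unchanged and replaces $b_i$ by $b_i+\partial(a_i)$. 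The first family of equations is therefore preserved, while the second becomes, after expansion,
\[
\sum_{i=1}^{n}\frac{\partial P_j}{\partial x_i}(\bar a)\,b_i \;+\; \sum_{i=1}^{n}\frac{\partial P_j}{\partial x_i}(\bar a)\,\partial(a_i).
\]
The first sum vanishes by hypothesis. For the second, I would invoke the standard fact that in characteristic $0$ every derivation of $K$ vanishes on $\mathbb{Q}^{\mathrm{alg}}$ (differentiate the minimal polynomial of any algebraic element). Since the coefficients of $P_j$ lie in $\mathbb{Q}^{\mathrm{alg}}$, applying $\partial$ to the identity $P_j(\bar a)=0$ and using the chain rule gives exactly
\[
\sum_{i=1}^{n}\frac{\partial P_j}{\partial x_i}(\bar a)\,\partial(a_i)=\partial\bigl(P_j(\bar a)\bigr)=0.
\]
Hence both summands vanish and $\sigma_\partial$ maps $TW$ into $TW$; symmetrically $\sigma_{-\partial}$ maps it back, so the predicate $P_W$ is preserved.

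The main (indeed only) obstacle is this last calculation, and it hinges on the two structural inputs that were built into the definition of $M$: that the tangent-bundle predicates are restricted to varieties $W$ defined over $\mathbb{Q}^{\mathrm{alg}}$, and that any derivation of $K$ is trivial on $\mathbb{Q}^{\mathrm{alg}}$. Once these are in place, the verification is a one-line application of the chain rule, and the lemma follows.
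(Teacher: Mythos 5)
Your proof is correct and follows essentially the same route as the paper: the only substantive point is preservation of the predicates $P_W$, which both you and the authors establish by applying $\partial$ to the identity $P_j(\bar a)=0$ and using that a derivation of a characteristic-$0$ field vanishes on $\mathbb{Q}^{\mathrm{alg}}$, so that the correction term $\sum_i \frac{\partial P_j}{\partial x_i}(\bar a)\,\partial(a_i)$ is zero. Your explicit remark that $\sigma_{-\partial}$ furnishes the inverse (and hence surjectivity onto $P_W$) is a small tidiness the paper leaves implicit, but the argument is the same.
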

\begin{proof}  Clearly $\sigma$ is a permutation of $M$ and preserves the the relations and functions in (i), (ii), (iii), (iv) above.
So it just remains to show that $\sigma(P_{W}) = P_{W}$ for any irreducible variety $W\subset P^{n}$ over ${\mathbb Q}^{alg}$. 
 Suppose that $(\bar a,\bar u)$ (strictly speaking  $((a_{1},u_{1}),..,(a_{n},u_{n})$) is in $S^{n}$.  We will show that  
\newline
(*) $(\bar a,\bar u) \in P_{W}$
if and only if $\sigma_{\partial}(\bar a,\bar u)$ is in $P_{W}$.  
\newline 
We may assume ${\bar a}\in W$. 
Let $Q(x_{1},..,x_{n})$ be a polynomial over ${\mathbb Q}^{alg}$ which is in the ideal of $W$.  So $Q(\bar a) = 0$. So applying $\partial$ to both sides and noting that it vanishes on the coefficients of $Q$ we see that

$$\sum_{i=1}^n \frac{\partial Q}{\partial x_i}(\bar a)\partial(a_i)=0.$$

Hence

$$\sum_{i=1}^n\frac{\partial Q}{\partial x_i}(\bar a)u_i=0$$ if and only if

$$\sum_{i=1}^n\frac{\partial Q}{\partial x_i}(\bar a)(u_i+\partial(a_i))=0.$$

This proves (*) and hence the Lemma.
\end{proof}

\begin{cor} Let $a\in S$ and $B\subset S$ be such that  $\pi(a)\notin acl(\pi(B))$  (in the structure $P$). Then $a\notin acl(B\cup P)$. 
\end{cor}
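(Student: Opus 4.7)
The plan is to reach a contradiction by producing, for each $t\in K$, an automorphism of $M$ that fixes $B\cup P$ pointwise but sends $a$ to a distinct element; this will give $a$ infinitely many conjugates over $B\cup P$, so $a$ cannot lie in $\acl(B\cup P)$. All the automorphisms will be instances of $\sigma_{\partial}$ from Lemma 3.2.

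Write $a=(a_1,a_2)$ and let $k$ be the subfield of $K$ generated by $\pi(B)=\{b_1:(b_1,b_2)\in B\}$. Since the induced structure on $P$ is just the pure algebraically closed field, $\acl(\pi(B))$ computed in $P$ is the field-theoretic algebraic closure $k^{\mathrm{alg}}$, so the hypothesis $\pi(a)\notin\acl(\pi(B))$ says precisely that $a_1$ is transcendental over $k^{\mathrm{alg}}$.

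Next I would invoke the standard characteristic-zero fact that the zero derivation of $k^{\mathrm{alg}}$ extends to a derivation of $K$ whose values on any transcendence basis of $K$ over $k^{\mathrm{alg}}$ can be freely prescribed. Fixing a transcendence basis containing $a_1$, I obtain for every $t\in K$ a derivation $\partial_t$ of $K$ with $\partial_t\!\restriction\! k^{\mathrm{alg}}=0$ and $\partial_t(a_1)=t$. By Lemma 3.2 the map $\sigma_{\partial_t}$ is an automorphism of $M$, and it fixes $P$ pointwise by construction. For any $(b_1,b_2)\in B$ the first coordinate $b_1$ lies in $k\subseteq k^{\mathrm{alg}}$, so $\partial_t(b_1)=0$ and hence $\sigma_{\partial_t}(b_1,b_2)=(b_1,b_2)$; thus $\sigma_{\partial_t}$ fixes $B$ pointwise as well. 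On the other hand $\sigma_{\partial_t}(a)=(a_1,a_2+t)$, and these are pairwise distinct as $t$ varies, contradicting $a\in\acl(B\cup P)$.

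The only real content is Lemma 3.2 itself; once that is in hand, the derivation-theoretic step is a classical computation with K\"ahler differentials, and the passage from ``infinitely many images under automorphisms fixing the parameters'' to ``not in $\acl$'' is routine. The one point that deserves a moment's thought is making sure the hypothesis is correctly exploited: the assumption $\pi(a)\notin\acl(\pi(B))$, taken in the pure field $P$, is what guarantees enough freedom in prescribing $\partial_t(a_1)$ while still killing every $b_1$.
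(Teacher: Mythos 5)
Your proof is correct and follows essentially the same route as the paper: both arguments produce derivations of $K$ vanishing on $\pi(B)$ (hence on its algebraic closure) but not at $\pi(a)$, apply Lemma 3.2 to get automorphisms fixing $B\cup P$ pointwise, and conclude that $a$ has infinitely many conjugates. The only cosmetic difference is that the paper takes a single such derivation $\partial$ and iterates $\sigma_{\partial}$ to get an infinite orbit $\{(\pi(a),d+nc)\}$, whereas you vary the prescribed value $\partial_t(a_1)=t$ over a family of derivations.
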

\begin{proof} Let $\partial$ be a derivation of $K$ which vanishes on $\pi(B)$ but not on $\pi(a)$, Suppose $\partial(\pi(a)) = c \neq 0$.
Then $\sigma_{\partial}$ fixes $B$ and $P$ pointwise but has infinite orbit on $a$. Namely writing $a = (\pi(a),d)$, the orbit is  $\{(\pi(a),d+ nc):n=1,2,...\}$.  This proves the Corollary.
\end{proof} 

The following lemma summarizes some facts about definability, rank, etc. in $M$ which will be useful. 

\begin{lem} (i) Let $a\in  P$. Then $\pi^{-1}(a)$ is strongly minimal.
\newline
(ii) If moreover $a\notin acl(\emptyset)$ then for any $b\in \pi^{-1}(a)$, $RM(tp(b/a)) = 1$, and in fact $tp(b/a)$ implies $tp(b/P)$. 
\newline
(iii) Suppose $b_{1},..,b_{n}\in S$, $a_{i} = \pi(b_{i})$ for $i=1,..,n$ and the $a_{i}\in P$ are  independent (generic) in $P$. Then 
$\{b_{1}, b_{2},..,b_{n}\}$ is independent (so $RM(tp(b_{1},..,b_{n})) = 2n$ by (ii) and additivity of rank), $\{b_{1},..,b_{n}\}$ is independent over $(a_{1},..,a_{n})$ and in fact $tp(b_{1},..,b_{n}/a_{1},..,a_{n})$ implies $tp(b_{1},..,b_{n}/P)$
\newline
(iv)  Suppose $b\in S$, $B\subset P\cup S$ and $\pi(b)\notin acl(B)$. Then $b$ is independent from $B$ over $\emptyset$.
\newline
(v)  Let $b\in S$ be ``generic"  (i.e. $RM(tp(b)) = 2$, equivalently by (ii), $RM(tp(\pi(a)) = 1$). Then $tp(a)$ is not almost internal to $P$.
\end{lem}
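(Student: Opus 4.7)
The common tool behind all five parts is the affine bundle structure $\pi\colon S \to P$ together with the derivation automorphisms $\sigma_\partial$ from Lemma 3.2, plus Corollary 3.3 at the end. For (i), fix any $b_0 \in \pi^{-1}(a)$; the map $d \mapsto d * b_0$ is a $b_0$-definable bijection from $P$ onto $\pi^{-1}(a)$, so every definable subset of the fibre pulls back to a subset of $P$ definable in the full structure $M$. Since the induced structure on $P$ is just the $ACF$ structure, such subsets are finite or cofinite, so $\pi^{-1}(a)$ is strongly minimal.

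Parts (ii) and (iii) I would handle by one derivation trick. Given algebraically independent $a_1,\ldots,a_n \in P$ (with $n=1$ for (ii)) and two tuples $b_i = (a_i, c_i)$, $b_i' = (a_i, c_i')$ in $S$, extend $\{a_1,\ldots,a_n\}$ to a transcendence basis of $K$ over $\mathbb{Q}$ and let $\partial$ be the unique derivation of $K$ with $\partial(a_i) = c_i' - c_i$ and vanishing on the rest of the basis. By Lemma 3.2, $\sigma_\partial \in \mathrm{Aut}(M/P)$ maps $\bar b$ to $\bar b'$, so any two fibrewise tuples over $\bar a$ share a single type over $P$. This immediately yields ``$tp(b/a) \vdash tp(b/P)$'' in (ii) and ``$tp(\bar b/\bar a) \vdash tp(\bar b/P)$'' in (iii); letting $\bar b'$ range over infinitely many targets also shows $b \notin acl(a)$ and $b_i \notin acl(\bar a)$, so by (i) we obtain $RM(tp(b/a)) = 1$ and $RM(tp(b_i/\bar a)) = 1$. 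Hence $tp(\bar b/\bar a)$ is the unique generic type of $\prod_i \pi^{-1}(a_i)$, of Morley rank $n$. By additivity, $RM(\bar b) = RM(\bar b/\bar a) + RM(\bar a) = 2n$, and matching $\sum_i RM(b_i/\bar a) = n = RM(\bar b/\bar a)$ and $\sum_i RM(b_i) = 2n = RM(\bar b)$ (each $RM(b_i) = 2$ by (ii) plus additivity) yields independence of $\{b_i\}$ both over $\bar a$ and over $\emptyset$.

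Parts (iv) and (v) reduce to Corollary 3.3. For (iv), $\pi(b) \notin acl(B)$ implies $\pi(b) \notin acl(\pi(B \cap S))$ in $P$ (as the latter $acl$ sits inside $acl_M(B)$), so Corollary 3.3 gives $b \notin acl((B \cap S) \cup P) \supseteq acl(B \cup \{\pi(b)\})$. Combined with $RM(b/B,\pi(b)) \leq 1$ from strong minimality of the fibre and $RM(\pi(b)/B) = 1$, additivity then gives $RM(b/B) = 2 = RM(b)$, i.e.\ $b \ind B$ over $\emptyset$. For (v), if $tp(b)$ were almost internal to $P$ there would be $B$ and $b'$ realizing the nonforking extension of $tp(b)$ over $B$ with $b' \in acl(B \cup c)$ for some tuple $c$ from $P$. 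Since $b' \ind B$ and $\pi(b') \notin acl(\emptyset)$, we get $\pi(b') \notin acl(B)$, and the same Corollary 3.3 argument then places $b'$ outside $acl((B \cap S) \cup P) \supseteq acl(B \cup c)$, a contradiction.

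The only real subtlety is that derivations of $K$ can be prescribed arbitrarily on the algebraically independent $\bar a$; this is the one place where field theory enters, via extending to a transcendence basis, and Lemma 3.2 then does all the heavy lifting. Everything else is routine finite-rank bookkeeping and a direct appeal to Corollary 3.3.
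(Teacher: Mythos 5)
Your proof is correct and follows essentially the same route as the paper: strong minimality of the fibres via the $*$-action, the derivation automorphisms of Lemma 3.2, and rank additivity, with parts (iv) and (v) reduced to Corollary 3.3 exactly as in the paper. The only (cosmetic) difference is in (ii)--(iii), where you re-run the derivation argument directly --- prescribing $\partial(a_i)=c_i'-c_i$ on a transcendence basis to get transitivity of $\Aut(M/P)$ on $\prod_i\pi^{-1}(a_i)$ --- which yields the clause ``$tp(\bar b/\bar a)$ implies $tp(\bar b/P)$'' a bit more transparently than the paper's appeal to Corollary 3.3 plus uniqueness of the generic type of a strongly minimal set.
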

\begin{proof} (i) is clear because the fibre is definably isomorphic to $P$ under the action $*$ (although the definable isomorphism needs a parameter).
\newline 
(ii)  By Corollary 3.3,  $b\notin acl(P)$ so by (i) $tp(b/a)$ and $tp(b/P)$ have Morley rank $1$.
\newline
(iii) We will prove it for the case where $n=2$ (an inductive proof yields the general statement). First by (ii), 
\newline
(*) $b_{1}\notin acl(P)$. 
\newline
As $a_{2}\notin acl(a_{1}$, by Corollary 3.3, 
\newline (**) $b_{2}\notin acl(P,b_{3})$. 
The conclusion (using (i)) is that $\{b_{1},b_{2}\}$ is independent over $(a_{1},a_{2})$, in particular $RM(tp(b_{1},b_{2}/a_{1}a_{2})) = 2$. Additivity of Morley rank implies $RM(tp(b_{1},b_{2})) = 4$ and $b$ is independent from $b_{2}$ over $\emptyset$. But we also conclude from (*) and (**) that $b_{1}$ and $b_{2}$ are independent over $P$. This yields (iii).
\newline
(iv) Let $a=\pi(b)$. By Corollary 3.3, $b\notin acl(a,B)$. As $a\notin acl(B)$, we get that $RM(tp(b/B)) = 2$. So as $RM(tp(b)) = 2$ (by (iii)), $b$ 
is independent from $B$ over $\emptyset$. 
\newline
(v) Suppose $B\subset S$ and $b$ is independent from $B$ over $\emptyset$. Then $a = \pi(b) \notin acl(B)$, so by Corollary 3.3, 
$b\notin acl(B\cup P)$. 


\end{proof}

\begin{question}  Note that we can do exactly the same construction of $M$ in positive characteristic, and we ask whether the $CBP$ holds. We believe yes. In fact it is conceivable that any theory interpretable in $ACF_{p}$ has the $CBP$.

\end{question}

\subsection{The example}  We describe the configuration (essentially as outlined by the first author) which directly violates the canonical base property.  Let $a, b, c$ be generic independent in $P$, and let $d = (1-ac)/b$. Then $RM(tp(a,b,c,d)) = 3$ and $(a,b,c,d)$ is a generic point of the irreducible smooth algebraic subvariety $W$ of  $P^{4}$ defined by 

$$xw + yz = 1$$. 

It is well-known that $(a,b,c,d)$ witnesses the nonmodularity of $P$: namely
\begin{fact}  $(a,b) = Cb(tp(c,d/a,b))$ and  $acl(a,b)\cap acl(c,d) = \acl(\emptyset)$  (either working in $ACF_{0}$ or in $M$). 
Moreover, if $(c_{1},d_{1})$ realizes $tp(c,d/a,b)$ independently of $(c,d)$ over $(a,b)$, then $(c,d)$ is independent from $(c_{1},d_{1})$ over $\emptyset$, and $(a,b)\in acl(c,d,c_{1},d_{1})$. 
\end{fact}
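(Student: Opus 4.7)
The plan is to prove the three claims by direct geometric analysis of the smooth $3$-dimensional variety $W$ defined by the relation $ac+bd=1$, together with a ``doubling trick'' that takes two independent realizations of $\tp(c,d/a,b)$ and recovers $(a,b)$ by inverting a linear system.

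For the canonical base, I would observe that $\tp(c,d/a,b)$ is the generic type of the irreducible affine line $L_{a,b}=\{(z,w):az+bw=1\}$ in $P^2$. Because this equation is normalized by its constant term $1$, the map $(a',b')\mapsto L_{a',b'}$ is injective, so the canonical parameter of the $\emptyset$-definable family of lines at $(a,b)$ is $(a,b)$ itself, giving $\cb(\tp(c,d/a,b))=(a,b)$.

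For the independent-realization clause, let $(c_1,d_1)$ be a second generic point of $L_{a,b}$ independent from $(c,d)$ over $(a,b)$. Both $(c,d)$ and $(c_1,d_1)$ satisfy $ac+bd=1$ and $ac_1+bd_1=1$; the coefficient matrix $\left(\begin{smallmatrix}c&d\\c_1&d_1\end{smallmatrix}\right)$ has determinant $cd_1-c_1d$, which is nonzero because two independent generic points of a line are not proportional. Cramer's rule then places $(a,b)$ in $\dcl(c,d,c_1,d_1)$ via an explicit rational formula. For the $\emptyset$-independence of $(c,d)$ and $(c_1,d_1)$ I count dimensions: the fiber product $W\times_{(a,b)}W$ has dimension $2+1+1=4$, and since $(a,b)\in\dcl(c,d,c_1,d_1)$, the transcendence degree of $(c,d,c_1,d_1)$ is $4=2+2$, giving algebraic independence. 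The same count applies to any two distinct elements of a Morley sequence of $\tp(c,d/a,b)$ over $(a,b)$.

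Finally, for $\acl(a,b)\cap\acl(c,d)=\acl(\emptyset)$, I combine the preceding with a Morley-sequence pigeonhole. In ACF (strongly minimal), algebraically independent tuples satisfy $\acl\cap\acl=\acl(\emptyset)$, so for any two distinct elements $(c_i,d_i),(c_j,d_j)$ of the Morley sequence one has $\acl(c_i,d_i)\cap\acl(c_j,d_j)=\acl(\emptyset)$. Now take a Morley sequence $(c_n,d_n)_{n<\omega}$ of $\tp(c,d/a,b)$ with $(c_0,d_0)=(c,d)$, and choose $\sigma_n\in\Aut(M/a,b)$ with $\sigma_n(c,d)=(c_n,d_n)$. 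Given $e\in\acl(a,b)\cap\acl(c,d)$, each $\sigma_n(e)$ lies in $\acl(a,b)\cap\acl(c_n,d_n)$ and is one of the finitely many $(a,b)$-conjugates of $e$; by pigeonhole, some fixed value $e^*$ is realized for infinitely many indices $n_1<n_2<\ldots$. Then $e^*\in\acl(c_{n_1},d_{n_1})\cap\acl(c_{n_2},d_{n_2})=\acl(\emptyset)$, and since $\sigma_{n_1}$ fixes $\acl(\emptyset)$ pointwise, $e=\sigma_{n_1}^{-1}(e^*)\in\acl(\emptyset)$. The main obstacle is ensuring that arbitrary pairs from the Morley sequence really are $\emptyset$-independent---this requires the fiber-product dimension count noted above, using that $(a,b)\in\dcl(c_i,d_i,c_j,d_j)$ for any $i\neq j$---after which the pigeonhole reduction cleanly finishes the argument, without any separate $\acl$-versus-$\dcl$ manipulation.
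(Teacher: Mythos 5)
The paper offers no proof of this Fact: it is stated as the well-known witness to nonmodularity of an algebraically closed field, so there is nothing of the paper's to compare your argument against line by line. Your verification is correct and is essentially the standard one. The identification of $\cb(\tp(c,d/a,b))$ with $(a,b)$ via the injectively parametrized family of lines $az+bw=1$ is right (two such lines coincide only if the defining forms are proportional, and the constant term $1$ forces the proportionality factor to be $1$); the Cramer's-rule recovery of $(a,b)$ from two points is valid because proportional points on $az+bw=1$ must be equal, so independent realizations give a nonsingular coefficient matrix; and the transcendence-degree count $\mathrm{tr.deg}(c,d,c_1,d_1)=\mathrm{tr.deg}(a,b,c,d,c_1,d_1)=4=2+2$ correctly yields $\emptyset$-independence. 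The pigeonhole argument over a Morley sequence for $\acl(a,b)\cap\acl(c,d)=\acl(\emptyset)$ is also sound, resting on the general stable fact that $\emptyset$-independent tuples have $\acl$-intersection equal to $\acl(\emptyset)$. Two cosmetic points: an automorphism preserves $\acl(\emptyset)$ only \emph{setwise}, not pointwise, but that is all you need to conclude $e=\sigma_{n_1}^{-1}(e^*)\in\acl(\emptyset)$; and you work throughout in $ACF_0$, which transfers to $M$ because the induced structure on $P$ is exactly the field structure, a point worth one sentence. (You also silently correct the paper's transposition typo in the displayed equation for $W$, using $ac+bd=1$ consistently with $d=(1-ac)/b$ and with the tangent-space computation later in Section 3.)
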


We consider now $P_{W} \subset S_{4}$. We will consider again points of $S$ (externally) as pairs $(a,u)$, although the structure $M$ only sees $(a,u)$ as a single point of $S$ whose projection to $P$ is $a$.  
The fibre $P_{W}(a,b,c,d)$ of $P_{W}$ over $(a,b,c,d)$ is of course definable in $M$, and (externally) is the set of $(u,v,r,s)$ such that 

$$ cu  + vd + ra + bs  = 0$$. 

Let us fix such $u, v, r, s$. By Lemma 3.4(iii)  $\{(a,u), (b,v), (c,r)\}$ is independent and the Morley rank of its type is $6$.  The equation above tells us
that there is a unique $s$ such that  $(u,v,r,s)\in P_{W}(a,b,c,d)$. Hence $RM(tp((a,u), (b,v), (c,r), (d,s)) = 6$ and in fact $(a,u), (b,v), (c,r), (d,s)$ is a ``generic point" of $P(W)$ as well as a generic point of $P(W)(a,b,c,d)$ over $(a,b,c,d)$. With this notation we will prove:

\begin{thm} $((a,u),(b,v))$ is interalgebraic with the canonical base of $stp(((c,r),(d,s))/(a,u), (b,v))$ but  $tp(((a,u),(b,v)/(c,r),(d,s))$ is not almost internal to $P$
\end{thm}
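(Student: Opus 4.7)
Write $\alpha = ((c,r),(d,s))$, $\beta = ((a,u),(b,v))$, and $e = \cb(\stp(\alpha/\beta))$. The inclusion $e\subseteq\dcl(\beta)$ is automatic, so for the first assertion the content is $\beta\in\acl(e)$. My plan is first to establish the sharper fact that $\beta\in\dcl_M(\alpha,\alpha_1)$ for a pair of independent realizations of $\tp(\alpha/\beta)$, and then to finish by the standard ``non-forking plus algebraicity over a larger base implies algebraicity over the canonical base'' principle. For the second assertion I will argue by contradiction, using Corollary 3.3 to project a hypothetical almost-internality witness down to the pure field $P$ and then contradicting Fact 3.6.

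For the key step, pick $\alpha_1 = ((c_1,r_1),(d_1,s_1))$ realizing $\tp(\alpha/\beta)$ independently from $\alpha$ over $\beta$, and take any $\sigma\in\Aut(M/\alpha\alpha_1)$. Then $\sigma$ fixes $c,d,c_1,d_1\in P$; since the induced structure on $P$ is the pure algebraically closed field, Fact 3.6 gives $(a,b)\in\dcl_{ACF}(c,d,c_1,d_1)\subseteq\dcl_M$, so $\sigma$ fixes $a$ and $b$ as well. Hence $\sigma(a,u)=(a,u')$ and $\sigma(b,v)=(b,v')$ for some $u',v'\in K$. The $P_W$-condition applied to $(\sigma\beta,\alpha)$ and $(\sigma\beta,\alpha_1)$ gives the linear system
\[
cu' + dv' + ar + bs = 0, \qquad c_1 u' + d_1 v' + a r_1 + b s_1 = 0,
\]
and $(u,v)$ satisfies the same pair. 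Subtracting, $(u'-u,v'-v)$ lies in the kernel of the $2\times 2$ matrix with rows $(c,d),(c_1,d_1)$, whose determinant $cd_1-c_1d$ is nonzero by genericity of the configuration. Hence $(u',v')=(u,v)$, so $\sigma$ fixes $\beta$ and $\beta\in\dcl_M(\alpha,\alpha_1)$. Since $\stp(\alpha/\beta)$ does not fork over its canonical base $e$, the Morley sequence $\{\alpha,\alpha_1\}$ is independent from $\beta$ over $e$; combined with $\beta\in\acl(e,\alpha,\alpha_1)$, this forces $\beta\in\acl(e)$, which is the desired interalgebraicity.

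For the failure of almost internality, suppose otherwise. By Remark 1.5 there exist $B\supseteq\{\alpha\}$ with $\beta\ind_\alpha B$, and a tuple $\bar p$ from $P$, such that $\beta\in\acl(B,\bar p)$. Decompose $B = B_S\cup B_P\cup\{\alpha\}$ with $B_S\subseteq S$ and $B_P\subseteq P$; absorbing $B_P,\bar p$ into $P$ gives $\beta\in\acl(B_S\cup\{\alpha\}\cup P)$. Applying Corollary 3.3 to $(a,u)\in S$ with the $S$-subset $B_S\cup\{(c,r),(d,s)\}$ yields $a\in\acl(\pi(B_S)\cup\{c,d\})$ in $P$. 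On the other hand, projecting $\beta\ind_\alpha B$ via $\pi$ produces $(a,b)\ind_{(c,d)}^M \pi(B_S)\cup B_P$; since all parties now lie in the pure-field sort $P$, this is plain field-independence, and in particular $a\ind_{(c,d)}\pi(B_S)$ in the field. Combining the algebraicity above with this independence (non-forking plus algebraicity over a larger base) forces $a\in\acl(c,d)$ in the field, contradicting $a\notin\acl(\emptyset)$ together with $\acl(a,b)\cap\acl(c,d)=\acl(\emptyset)$ from Fact 3.6.

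The main obstacle is the key step $\beta\in\dcl_M(\alpha,\alpha_1)$ of Part 1: one must confirm that the reduct $M$, which carries only the tangent-bundle predicates rather than the full ambient field on $S$-coordinates, still has enough information to solve for $(u,v)$ from two independent fibers. The predicate $P_W$ is exactly what does this, and its invertibility rests on the same determinant $cd_1-c_1d$ that underlies Fact 3.6. Once this is in place, the non-internality half is essentially formal, combining Corollary 3.3 with the pure-field induced structure on $P$.
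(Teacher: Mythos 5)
Your proof of the interalgebraicity is correct and is essentially the paper's own argument: you take two $\beta$-independent realizations, recover $(a,b)$ from $(c,d,c_1,d_1)$ via the plane-curve fact (your ``Fact 3.6'' is the paper's Fact 3.5), recover $(u,v)$ from the two linear equations cut out by $P_W$ using the nonvanishing determinant $cd_1-c_1d$, and finish with the standard Morley-sequence/canonical-base argument. (Your phrasing via automorphisms versus the paper's intersection of two affine lines in the fibre is immaterial; do note that the nonvanishing of the determinant is exactly where you need $(c,d)\ind(c_1,d_1)$ over $\emptyset$, which is the paper's Claim I and follows from Lemma 3.4(iii) together with the ``moreover'' clause of Fact 3.5 --- ``genericity of the configuration'' is hiding that reduction.)

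The non-internality half has a genuine gap. You assert that ``projecting $\beta\ind_\alpha B$ via $\pi$ produces $(a,b)\ind_{(c,d)}\pi(B_S)\cup B_P$.'' Applying $\pi$ to the two sides of an independence is harmless, but applying it to the \emph{base} is not: $X\ind_\alpha Y$ does not in general yield $X\ind_{\pi(\alpha)} Y$, since $\alpha$ may carry forking information over $(c,d)$. To repair this you would need the additional fact $(a,b)\ind_{(c,d)}\alpha$, i.e.\ $\RM((a,b)/(c,r),(d,s))=\RM((a,b)/(c,d))=1$, which is true here but requires a rank computation you have not made (e.g.\ $\RM(\beta/\alpha)=2$ and $\RM(\beta/(a,b),\alpha)=1$). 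The paper avoids this entirely by working over $\emptyset$: since $a,c,d$ are independent generics of $P$, Lemma 3.4(iii) gives $(a,u)\ind((c,r),(d,s))$ over $\emptyset$, so almost internality of $\tp(\beta/\alpha)$ would pass to the nonforking restriction $\tp((a,u))$, contradicting Lemma 3.4(v) --- whose proof is precisely your Corollary 3.3 argument, but run over the empty base where no base-change issue arises. I recommend you adopt that route, or else insert the computation of $(a,b)\ind_{(c,d)}\alpha$ explicitly.
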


As $P$ is the unique strongly minimal set up to  nonorthogonality (in fact any strongly minimal set in $M$ is in definable bijection with a strongly minimal set definable on the $P$-sort), Theorem 3.6 shows that $Th(M)$ does not have the $CBP$

\begin{proof} (of Theorem 3.6) 
We start by proving that $((a,u),(b,v))$ is interalgebraic with the canonical base of $stp(((c,r),(d,s))/(a,u), (b,v))$. 
Let $((c_{1},r_{1}), (d_{1}, s_{1}))$  realize $stp(((c,r),(d,s))/(a,u), (b,v))$ such that  
$((c_{1},r_{1}, (d_{1}, s_{1}))$ is independent from $((c,r),(d,s))$ over $\{(a,u), (b,v)\}$. \\

\noindent
{\em Claim I.} $((c_{1},r_{1}), (d_{1}, s_{1}))$ is independent from 
$((c,r),(d,s))$ over $\emptyset$  (in $M$). 
\newline 
{\em Proof.} By Lemma 3.4(iii)  $(c,d)$ is independent from $((a,u),(b,v))$ over $(a,b)$. Hence $(c_{1},d_{1})$ is a realization of $tp(c,d/a,b)$ independent from $(c,d)$ over $(a,b)$. By Fact 3.5  $(c,d)$ is independent from $(c_{1},d_{1})$ over $\emptyset$ (in $P$). Hence $c,d,c_{1},d_{1}$ are independent generic in $P$, and we finish by Lemma 3.4(iii).\\

\noindent
{\em Claim II.} $((a,u), (b,v)) \in acl((c,r), (d,s), (c_{1},r_{1}), (d_{1},s_{1}))$.
\newline
{\em Proof.} By Fact 3.5 , $(a,b) \in acl(c,d,c_{1},d_{1})$. So we can add $a,b$ to the right hand side in the statement of the claim.
\newline
We have:
$$ cu  + dv + ar + bs  = 0$$

and

$$ c_{1}u  + d_{1}v + ar_{1} + bs_{1}  = 0$$.

Let $X$ be the set of $(u',v')$ in $\pi^{-1}(a,b)$ such that $cu'  + dv' + ar + bs  = 0$, and $Y$ the set of $(u',v')$ in $\pi^{-1}(a,b)$ such that $c_{1}u'  + d_{1}v' + ar_{1} + bs_{1}  = 0$. Then $X$ and $Y$ are definable (over $a,b,(c,r), (d,s), (c_{1},r_{1}), (d_{1},s_{1})$) in $M$, $X$ is (seen externally) a translate of the vector subspace $V_{1}$ of $\pi^{-1}(a,b)$ defined by  $cu' + dv' = 0$. Likewise $Y$ is a translate of the vector subspace $V_{2}$ defined by $c_{1}u' + d_{1}v' = 0$. As $c,d,c_{1}, d_{1}$ are independent these are distinct one-dimensional subspaces. Hence $X$ and $Y$ intersect in a point, namely $(u,v)$ and the proof of Claim II is complete.\\

From Claims I and II we conclude on general grounds that $((a,u),(b,v))$ is interalgebraic with the canonical base of $stp(((c,r),(d,s))/(a,u), (b,v))$.

On the other hand by the independence of $\{a,c,d\}$, we have by Lemma 3.4(iii) that $(a,u)$ is independent from $((c,r),(d,s))$ over $\emptyset$. So by Lemma 3.4(v), 
$tp((a,u)/(c,r), (d,s))$ is not almost internal to $P$, so also
\newline
$tp((a,u),(b,v))/(c,r), (d,s))$ is not almost internal to $P$. This completes the proof of Theorem 3.6.

\end{proof}

\subsection{Group version} Here we point out that the same data as in subsection 3.2 yields a definable group and a type with trivial stabilizer which is not almost internal to $P$, so by Fact 1.3 gives another witness to the failure of the $CBP$.  In  fact the proof is easier than that of Theorem 3.6. \\

Before stating and proving the result, let us note that the natural additive group structure on $S$ is $\emptyset$-definable in $M$, because the graph of addition on $S$ is precisely the tangent bundle of the graph of addition on $K = P$. So we can and will speak of $(S,+)$ and also $(S^{n},+)$ etc.
Let $((a,u), (b,v), (c,r), (d,s))\in S^{4}$ be as in subsection 3.2. Namely a ``generic point" of $P_{W}$. Let $q$ be the (strong) type of
$((a,u), (b,v), (c,r), (d,s))$ over $\emptyset$. 

\begin{thm} Working in the $\emptyset$-definable group $(S^{4},+)$, $Stab(q)$ is trivial, but $q$ is not almost internal to $P$.
\end{thm}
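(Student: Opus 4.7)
The plan is to prove the two assertions of Theorem 3.8 separately, and the work is almost entirely linear-algebraic.

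For the failure of internality, I will project. The first-coordinate projection $S^4 \to S$ is $\emptyset$-definable and sends a realization of $q$ to $(a,u)\in S$. By Lemma 3.4(ii), $RM(tp((a,u))) = 1 + RM(tp(a)) = 2$, so $(a,u)$ is a generic element of $S$. By Lemma 3.4(v), $tp((a,u))$ is not almost internal to $P$; since almost internality passes to $\emptyset$-definable images, $q$ is not almost internal to $P$ either.

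For the triviality of $Stab(q)$: since $q$ is the stationary generic type of the irreducible $\emptyset$-definable set $P_W \subseteq S^4$, the type stabilizer equals the set-theoretic stabilizer $\{g \in S^4 : g + P_W = P_W\}$. I will show this group is trivial using the $\emptyset$-definable group homomorphism $\Pi : (S^4,+)\to(P^4,+)$ obtained by applying $\pi$ coordinatewise. Since $\Pi(P_W) = W$, any stabilizing $g$ must have $\Pi(g)\in Stab(W)$ in $(P^4,+)$. But the set-stabilizer of $W : xw + yz = 1$ in $(P^4,+)$ is trivial: if $(g_1,\ldots,g_4)$ preserves $W$, then expanding $(x+g_1)(w+g_4) + (y+g_2)(z+g_3) - 1$ and subtracting $xw + yz - 1$ produces a polynomial of degree $\le 1$ vanishing on $W$, and since the ideal of $W$ is generated by the quadratic $xw + yz - 1$, this polynomial must be zero, forcing $g_1 = \cdots = g_4 = 0$. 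Hence every element of $Stab(q)$ lies in $\ker(\Pi)$, i.e., has zero first coordinate in each factor of $S$.

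It then remains to handle $g = ((0,g_1'),\ldots,(0,g_4'))\in Stab(q)$. Picking a realization $e = ((a,u),(b,v),(c,r),(d,s))$ of $q$, the linear tangent equation $cu + dv + ar + bs = 0$ defining the fibre of $P_W$ over $(a,b,c,d)\in W$ must hold both for $(u,v,r,s)$ and for $(u+g_1', v+g_2', r+g_3', s+g_4')$; subtracting yields $c g_1' + d g_2' + a g_3' + b g_4' = 0$, which holds at the generic point of $W$ and hence identically on $W$. By the same ideal argument, no nonzero linear form in $(a,b,c,d)$ vanishes on $W$, so all $g_i' = 0$, whence $g = 0$. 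The main delicate point is simply the standard finite-Morley-rank identification of $Stab(q)$ with the set-stabilizer of the irreducible variety $P_W$; after that both verifications collapse to the single observation that no nonzero linear polynomial vanishes on the quadric $W$.
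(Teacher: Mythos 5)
Your proof is correct, and for the stabilizer computation it takes a genuinely different route from the paper. The non-internality half is identical (project to the first coordinate and invoke Lemma 3.4(v)). For the stabilizer, the paper never passes through the set-theoretic stabilizer of $P_W$: it works directly with the model-theoretic definition, first killing the image in $(P^4,+)$ by a generic-independent-element computation, and then handling elements of the kernel by a ``doubling'' trick --- it takes a second independent realization $((0,x'),\ldots,(0,z'))$ of the strong type of the putative stabilizing element, forms the differences $x''=x'-x$, etc., which live in $P$ via the $*$-action, and uses independence inside the pure field $P$ to force them to vanish; this is done precisely because the authors do not want to treat the linear relation $cx+dy+aw+bz=0$ among the second coordinates as a formula of the reduct $M$. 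Your approach replaces all of this by the identification $\mathrm{Stab}(q)=\{g: g+P_W=P_W\}$ followed by a purely external ideal-theoretic computation (no nonzero polynomial of degree $\le 1$ lies in the principal ideal of the quadric $W$), which is shorter and makes the algebraic geometry transparent. What it costs is that the identification itself carries the real model-theoretic content in a reduct: you need that $P_W$ still has Morley rank $6$ and degree $1$ \emph{in $M$}, and that an $M$-definable rank-$6$ subset of $P_W$ is Zariski dense, so that ``$g+\bar q=\bar q$'' forces the two irreducible $6$-dimensional closed sets $g+P_W$ and $P_W$ to coincide. This does hold (Morley rank and degree can only drop in a reduct, and both are already $6$ and $1$ for the generic type of $P_W$ computed in $M$ by Lemma 3.4(iii)), but you should say it: you flag it as ``the main delicate point'' without supplying the rank/degree transfer, and it is exactly the step the paper's more hands-on argument is designed to avoid leaning on. One further small improvement available to your version: once you know $g+P_W=P_W$ set-theoretically, the relation $cg_1'+dg_2'+ag_3'+bg_4'=0$ holds at \emph{every} point of $W$ (every fibre of $P_W$ is translated into itself), so no genericity of $(a,b,c,d)$ over the $g_i'$ is needed at all.
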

\begin{proof} We already know that $tp((a,u))$ is not almost internal to $P$, so the same thing is true of $q$.\\

For the rest, we first show that $Stab(tp(a,b,c,d))$ in $(P^{4},+)$ is trivial. Let $(g_{1},g_{2},g_{3},g_{4})\in P^{4}$ be independent from $(a,b,c,d)$ over $\emptyset$ and suppose that
$(g_{1},g_{2},g_{3},g_{4}) + (a,b,c,d)$ realizes $q$. So

$$(g_1+a)(g_3+c) +(g_2+b)(g_4+d)= 1 =  ac+bd,$$

We deduce that

$$g_1g_3 + g_1c+ ag_3+ g_2g_4+bg_4+ g_2d=0.$$

This clearly contradicts the independence assumption, unless $g_{i} = 0$ for $i=1,2,3,4$.\\

Now we show $Stab(q)$ is trivial. By what we have just seen any element of $Stab(q)$ must be of the form $((0,x), (0,y), (0,w), (0,z))$. We assume that this element is independent with $((a,u), (b,v), (c,r), (d,s))$ over $\emptyset$ and that 
$((0,x), (0,y), (0,w), (0,z)) + ((a,u), (b,v), (c,r), (d,s))$ realizes $q$, and try to get a contradiction, unless $x=y=w=z=0$.
 So clearly
 
 $$c(x+u)+d(y+v)+ a(w+r) + b(z+s)=0.$$
 
 whence
 
 $$cx + dy + aw+ bz = 0$$
 
 This however is not expressed by a formula in $M$. So we let 
\newline 
$((0,x'), (0,y'), (0,w'), (0,z'))$ realize the same strong type as $((0,x), (0,y), (0,w), (0,z))$ and independent with $((0,x), (0,y), (0,w), (0,z)), ((a,u), (b,v), (c,r), (d,s)))$ over $\emptyset$. So we also have:
 
 $$cx' + dy' + aw'+ bz' = 0$$
 
 Now let $x''\in P$ be such that  $x''*(0,x) = (0,x')$  (i.e. externally $x'' = x' - x$). Likewise for $y'',w'',z''$. So we see that
 
 $$cx'' + dy'' + aw''+ bz'' = 0$$
 
 But $(x'',y'',w'',z'')$ is independent from $a,b,c,d)$  (in $P$) and we deduce easily that  $x''=y'' = w'' = z'' = 0$. Hence $x=x',y=y',w=w',z=z')$. 
 The independence of  $((0,x), (0,y), (0,w), (0,z))$ and $((0,x'), (0,y'), (0,w'), (0,z'))$ over $\emptyset$ implies that 
\newline
$((0,x), (0,y), (0,w), (0,z))\in acl(\emptyset)$. Hence $Stab(q)$ is finite, so trivial.
\end{proof}

\bibliographystyle{plain}

\end{document}